\RequirePackage{etex}
\documentclass[10pt]{amsart}
\usepackage{tabularx}
\usepackage{fancyhdr}
\usepackage{float}
\usepackage[margin=1.35 in]{geometry}
\usepackage[english]{babel}
\usepackage[utf8]{inputenc}
\usepackage{subcaption}
\usepackage{amsmath}
\usepackage{amssymb}
\usepackage{amsfonts}
\usepackage{amsthm}
\usepackage{mathdots}
\usepackage{mathrsfs}
\usepackage[all]{xy}
\usepackage[pdftex]{graphicx}
\usepackage{color}
\usepackage{cite}
\usepackage{url}
\usepackage{indent first}
\usepackage[labelfont=bf,labelsep=period,justification=raggedright]{caption}
\usepackage[english]{babel}
\usepackage[utf8]{inputenc}
\usepackage[colorinlistoftodos]{todonotes}
\usepackage{tkz-fct}
\usepackage{tikz}
\usepackage{tikz-3dplot}
\usetikzlibrary{patterns}
\usetikzlibrary{calc}
\usetikzlibrary{knots}
\usetikzlibrary{spath3, intersections, hobby}
\usetikzlibrary{arrows, automata}
\usetikzlibrary{arrows.meta}
\usetikzlibrary{shapes.multipart}
\usepackage{pgfplots}
\usepackage{multicol}
\PassOptionsToPackage{dvipsnames,svgnames}{xcolor}
\usepackage{textcomp}
\usepackage{bbm}
\usepackage{array}
\newcolumntype{C}{>{$}c<{$}}
\usepackage{forest}
\usepackage{tikzsymbols}
\usetikzlibrary{3d}
\tdplotsetmaincoords{80}{110}
\usepackage{worldflags}
\usepackage{youngtab}
\usepackage{tikzducks}
\usepackage{hyperref}

\usepackage{framed}

\makeatletter
\DeclareFontFamily{OMX}{MnSymbolE}{}
\DeclareSymbolFont{MnLargeSymbols}{OMX}{MnSymbolE}{m}{n}
\SetSymbolFont{MnLargeSymbols}{bold}{OMX}{MnSymbolE}{b}{n}
\DeclareFontShape{OMX}{MnSymbolE}{m}{n}{
    <-6>  MnSymbolE5
   <6-7>  MnSymbolE6
   <7-8>  MnSymbolE7
   <8-9>  MnSymbolE8
   <9-10> MnSymbolE9
  <10-12> MnSymbolE10
  <12->   MnSymbolE12
}{}
\DeclareFontShape{OMX}{MnSymbolE}{b}{n}{
    <-6>  MnSymbolE-Bold5
   <6-7>  MnSymbolE-Bold6
   <7-8>  MnSymbolE-Bold7
   <8-9>  MnSymbolE-Bold8
   <9-10> MnSymbolE-Bold9
  <10-12> MnSymbolE-Bold10
  <12->   MnSymbolE-Bold12
}{}

\let\llangle\@undefined
\let\rrangle\@undefined
\DeclareMathDelimiter{\llangle}{\mathopen}%
                     {MnLargeSymbols}{'164}{MnLargeSymbols}{'164}
\DeclareMathDelimiter{\rrangle}{\mathclose}%
                     {MnLargeSymbols}{'171}{MnLargeSymbols}{'171}
\makeatother

\makeatletter
\tikzoption{canvas is xy plane at z}[]{%
  \def\tikz@plane@origin{\pgfpointxyz{0}{0}{#1}}%
  \def\tikz@plane@x{\pgfpointxyz{1}{0}{#1}}%
  \def\tikz@plane@y{\pgfpointxyz{0}{1}{#1}}%
  \tikz@canvas@is@plane
}
\makeatother

\tikzset{surface/.style={draw=black, left color=orange,right color=orange,middle
color=orange!60!#1, fill opacity=1},surface/.default=white}

\pgfplotsset{compat=1.17}

\newcommand{\eps}{\varepsilon}

\newcommand{\QQ}{\mathbb{Q}}

\newcommand{\ZZ}{\mathbb{Z}}

\def\multiset#1#2{\ensuremath{\left(\kern-.3em\left(\genfrac{}{}{0pt}{}{#1}{#2}\right)\kern-.3em\right)}}

\makeatletter

\newif\ifpgfcirclecrosssplitcustomfill
\tikzset{%
  circle cross split part fill/.code=\def\pgf@lib@sh@ccs@list@fill{#1}\pgfcirclecrosssplitcustomfilltrue,%
  circle cross split uses custom fill/.is if=pgfcirclecrosssplitcustomfill}
\pgfdeclareshape{circle cross split}{%
  \nodeparts{text,two,three,four}%
  \savedanchor\centerpoint{%
    \pgfmathsetlength\pgf@xa{\pgfkeysvalueof{/pgf/inner xsep}}%
    \pgfmathsetlength\pgf@ya{\pgfkeysvalueof{/pgf/inner ysep}}%
    \pgf@x\wd\pgfnodeparttextbox
    \pgf@yb\dp\pgfnodeparttextbox
    \pgf@yc\dp\pgfnodeparttwobox
    \ifdim\pgf@yb>\pgf@yc
      \pgf@yc\pgf@yb
    \fi
    \advance\pgf@y-\pgf@yc
    \advance\pgf@x\pgf@xa
    \advance\pgf@y-\pgf@ya
    \advance\pgf@x.5\pgflinewidth
    \advance\pgf@y-.5\pgflinewidth
  }%
  \savedanchor\twoanchor{%
    \pgfmathsetlength\pgf@xa{\pgfkeysvalueof{/pgf/inner xsep}}%
    \pgfmathsetlength\pgf@ya{\pgfkeysvalueof{/pgf/inner ysep}}%
    \advance\pgf@x.5\pgflinewidth
    \advance\pgf@x\pgf@xa
    \advance\pgf@y.5\pgflinewidth
    \advance\pgf@y\pgf@ya
    \pgf@yb\dp\pgfnodeparttextbox
    \pgf@yc\dp\pgfnodeparttwobox
    \ifdim\pgf@yb>\pgf@yc
      \pgf@yc\pgf@yb
    \fi
    \advance\pgf@y\pgf@yc
  }%
  \savedanchor\threeanchor{%
    \pgfmathsetlength\pgf@ya{\pgfkeysvalueof{/pgf/inner ysep}}%
    \pgf@x\wd\pgfnodeparttextbox
    \pgf@yb\dp\pgfnodeparttextbox
    \pgf@yc\dp\pgfnodeparttwobox
    \ifdim\pgf@yb>\pgf@yc
      \pgf@yc\pgf@yb
    \fi
    \advance\pgf@y-\pgf@yc
    \advance\pgf@y-2\pgf@ya
    \advance\pgf@y-\pgflinewidth
    \pgf@yb\ht\pgfnodepartthreebox
    \pgf@yc\ht\pgfnodepartfourbox
    \ifdim\pgf@yb>\pgf@yc
      \pgf@yc\pgf@yb
    \fi
    \advance\pgf@y-\pgf@yc
    \advance\pgf@x-\wd\pgfnodepartthreebox
  }%
  \savedanchor\fouranchor{%
    \pgfmathsetlength\pgf@xa{\pgfkeysvalueof{/pgf/inner xsep}}%
    \advance\pgf@x\wd\pgfnodepartthreebox
    \advance\pgf@x2\pgf@xa
    \advance\pgf@x\pgflinewidth
  }%
  \saveddimen\radius{%
    \pgf@y\ht\pgfnodeparttextbox
    \pgf@yb\ht\pgfnodeparttwobox
    \ifdim\pgf@yb>\pgf@y
      \pgf@y\pgf@yb
    \fi
    \pgf@yc\dp\pgfnodeparttextbox
    \pgf@yb\dp\pgfnodeparttwobox
    \ifdim\pgf@yc>\pgf@yb
      \advance\pgf@y\pgf@yc
    \else
      \advance\pgf@y\pgf@yb
    \fi
    \pgf@yb\ht\pgfnodepartthreebox
    \ifdim\pgf@yb<\ht\pgfnodepartfourbox
      \pgf@yb\ht\pgfnodepartfourbox
    \fi
    \pgf@yc\dp\pgfnodepartthreebox
    \ifdim\pgf@yc<\dp\pgfnodepartfourbox
      \advance\pgf@yb\dp\pgfnodepartfourbox
    \else
      \advance\pgf@yb\pgf@yc
    \fi
    \ifdim\pgf@yc>\pgf@y
      \pgf@y\pgf@yc
    \fi
    \pgfmathsetlength\pgf@ya{\pgfkeysvalueof{/pgf/inner ysep}}%
    \advance\pgf@y2\pgf@ya
    \pgf@x\wd\pgfnodeparttextbox
    \pgf@xa\wd\pgfnodepartthreebox
    \pgf@xb\wd\pgfnodeparttwobox
    \pgf@xc\wd\pgfnodepartfourbox
    \ifdim\pgf@xa>\pgf@x
      \pgf@x\pgf@xa
    \fi
    \ifdim\pgf@xb>\pgf@x
      \pgf@x\pgf@xb
    \fi
    \ifdim\pgf@xc>\pgf@x
      \pgf@x\pgf@xc
    \fi
    \pgfmathsetlength\pgf@xa{\pgfkeysvalueof{/pgf/inner xsep}}%
    \advance\pgf@x2\pgf@xa
    \ifdim\pgf@y>\pgf@x
      \pgf@x\pgf@y
    \fi
    \advance\pgf@x.5\pgflinewidth
    \pgfmathsetlength{\pgf@xb}{\pgfkeysvalueof{/pgf/minimum width}}%
    \pgfmathsetlength{\pgf@yb}{\pgfkeysvalueof{/pgf/minimum height}}%
    \ifdim\pgf@x<.5\pgf@xb
        \pgf@x=.5\pgf@xb
    \fi
    \ifdim\pgf@x<.5\pgf@yb
        \pgf@x=.5\pgf@yb
    \fi
    \pgfmathsetlength{\pgf@xb}{\pgfkeysvalueof{/pgf/outer xsep}}%
    \pgfmathsetlength{\pgf@yb}{\pgfkeysvalueof{/pgf/outer ysep}}%
    \ifdim\pgf@xb<\pgf@yb
      \advance\pgf@x\pgf@yb
    \else
      \advance\pgf@x\pgf@xb
    \fi
  }%
  \inheritanchorborder[from=circle]%
  \inheritanchor[from=circle]{north}%
  \inheritanchor[from=circle]{north west}%
  \inheritanchor[from=circle]{north east}%
  \inheritanchor[from=circle]{center}%
  \inheritanchor[from=circle]{west}%
  \inheritanchor[from=circle]{east}%
  \inheritanchor[from=circle]{mid}%
  \inheritanchor[from=circle]{mid west}%
  \inheritanchor[from=circle]{mid east}%
  \inheritanchor[from=circle]{base}%
  \inheritanchor[from=circle]{base west}%
  \inheritanchor[from=circle]{base east}%
  \inheritanchor[from=circle]{south}%
  \inheritanchor[from=circle]{south west}%
  \inheritanchor[from=circle]{south east}%
  \anchor{two}{\twoanchor}%
  \anchor{three}{\threeanchor}%
  \anchor{four}{\fouranchor}%
  \inheritbackgroundpath[from=circle]
  \beforebackgroundpath{%
    \pgfutil@tempdima=\radius
    \pgfmathsetlength{\pgf@xb}{\pgfkeysvalueof{/pgf/outer xsep}}%
    \pgfmathsetlength{\pgf@yb}{\pgfkeysvalueof{/pgf/outer ysep}}%
    \ifdim\pgf@xb<\pgf@yb
      \advance\pgfutil@tempdima by-\pgf@yb
    \else
      \advance\pgfutil@tempdima by-\pgf@xb
    \fi
    \advance\pgfutil@tempdima by-.5\pgflinewidth%
    \pgfsetshortenstart{0pt}%
    \pgfsetshortenend{0pt}%
    \pgfsetarrows{-}%
    \pgfpathmoveto{\pgfpointadd{\centerpoint}{\pgfqpoint{-\pgfutil@tempdima}{0pt}}}%
    \pgfpathlineto{\pgfpointadd{\centerpoint}{\pgfqpoint{\pgfutil@tempdima}{0pt}}}%
    \pgfpathmoveto{\pgfpointadd{\centerpoint}{\pgfqpoint{0pt}{-\pgfutil@tempdima}}}%
    \pgfpathlineto{\pgfpointadd{\centerpoint}{\pgfqpoint{0pt}{\pgfutil@tempdima}}}%
    \pgfusepathqstroke
  }%
  \behindbackgroundpath{%
    \pgfutil@tempdima=\radius
    \pgfmathsetlength{\pgf@xb}{\pgfkeysvalueof{/pgf/outer xsep}}%
    \pgfmathsetlength{\pgf@yb}{\pgfkeysvalueof{/pgf/outer ysep}}%
    \ifdim\pgf@xb<\pgf@yb
      \advance\pgfutil@tempdima by-\pgf@yb
    \else
      \advance\pgfutil@tempdima by-\pgf@xb
    \fi
    \advance\pgfutil@tempdima by-.5\pgflinewidth%
    \ifpgfcirclecrosssplitcustomfill%
      \pgf@lib@sh@rs@process@list{\pgf@lib@sh@ccs@list@fill}{4}%
      {%
        \pgfmathloop
           \ifnum\pgfmathcounter>4%
           \else%
             \pgf@lib@sh@getalpha\pgf@lib@sh@rs@number{\pgfmathcounter}%
              \edef\pgf@tempa{\csname pgf@lib@sh@rs@\pgf@lib@sh@rs@number @item\endcsname}%
              \ifx\pgf@tempa\pgf@lib@sh@rs@nonetext\else
                \pgfsetfillcolor{\pgf@tempa}%
                \pgf@lib@sh@ccs@angles{\pgfmathcounter}%
                \pgfpathmoveto{\centerpoint}%
                \pgfpathlineto{\pgfpointadd{\centerpoint}{\pgfqpointpolar{\pgf@lib@sh@ccs@angle}{\pgfutil@tempdima}}}%
                \pgfpatharc{\pgf@lib@sh@ccs@angle}{\pgf@lib@sh@ccs@angle@}{\pgfutil@tempdima}%
                \pgfpathclose
                \pgfusepathqfill
              \fi
        \repeatpgfmathloop
      }%
    \fi%
  }%
}
\def\pgf@lib@sh@ccs@angles#1{%
  \ifcase#1\or\def\pgf@lib@sh@ccs@angle{90}%
           \or\def\pgf@lib@sh@ccs@angle{0}%
           \or\def\pgf@lib@sh@ccs@angle{180}%
           \else\def\pgf@lib@sh@ccs@angle{270}%
  \fi
  \edef\pgf@lib@sh@ccs@angle@{\number\numexpr\pgf@lib@sh@ccs@angle+90\relax}%
}
\makeatother

\theoremstyle{plain}
\newtheorem{thm}{Theorem}
\newtheorem{lemma}[thm]{Lemma}
\newtheorem{cor}[thm]{Corollary}

\newtheorem{prop}[thm]{Proposition}

\theoremstyle{definition}
\newtheorem{defn}{Definition}

\theoremstyle{remark}
\newtheorem{rem}{Remark}
\newtheorem{ex}{Example}

\numberwithin{equation}{section}
\usepackage{cleveref}

\title{The Quadratic and Cubic Characters of 2}
\date{June 2025}
\author{Matias C. Relyea}

\begin{document}

\maketitle

\begin{abstract}
      The solvability of the cubic congruence $x^{3}\equiv 2\pmod{p}$ is referred to as the \textit{cubic character of 2}. In evaluating the cubic character of 2, we introduce the Eisenstein integers, Gauss and Jacobi sums, and the law of cubic reciprocity. We motivate this proof by giving ample historical information surrounding the early development of higher reciprocity laws as well as Gauss' proof of the solvability of the quadratic congruence $x^{2}\equiv 2\pmod{p}$; conventionally the \textit{quadratic character of 2}. We simultaneously outline other relevant contributions by Fermat, Euler, Legendre, Jacobi, and Eisenstein. 
\end{abstract}

\section*{From the Beginning}
Many elementary number theory texts cover everything up to quadratic reciprocity and rarely anything further pertaining to reciprocity laws. While it is true that higher reciprocity laws such as cubic, biquadratic, Eisenstein, or even Artin reciprocity are rooted in mechanics demanding a sufficient amount of algebraic number theory, their special cases are approachable with only a few additional ideas from algebra. In a similar way, the history of reciprocity laws from their origin is undoubtedly rich, hence Lemmermeyer’s book \cite{lemmermeyer2000reciprocity}. It is illuminating to understand this rich history before encountering a modern treatment of reciprocity laws. In this paper, we are concerned with relating the solvability of the cubic congruence $x^{3}\equiv 2\pmod{p}$ to an elementary representation of primes while utilizing some fundamental properties of Gauss and Jacobi sums. We will also use Gauss’ approach to solving the quadratic congruence $x^{2}\equiv 2\pmod{p}$ as the foundation for how our story develops. We do this while considering the nuance of historical contributions from Fermat, Euler, Gauss, Legendre, Jacobi, and Eisenstein. 

\indent The law of quadratic reciprocity is perhaps the most well-known theorem of elementary number theory, and it is typically the first major result encountered by most students in a first course. Many refer to the law as the pinnacle of elementary number theory; even Carl Friedrich Gauss, who gave the first proofs of the law, referred to it as the ``Theorema Aureum," or the ``Golden Theorem." The main theorem is as follows, where $(a/p)$ denotes the Legendre symbol\footnote{Recall that the Legendre symbol $(a/p)$ is defined to be $1$ when $a$ is a solution to the quadratic congruence $x^{2}\equiv a\pmod{p}$ (a quadratic residue), and $-1$ otherwise ($a$ a quadratic nonresidue). If $a\equiv 0\pmod{p}$, then $(a/p)$ is defined to be $0$.}.
\begin{thm}[The Law of Quadratic Reciprocity] \label{the law of quadratic reciprocity}
    Let $p,q\in\mathbb{Z}$ be odd primes. Then
        \begin{equation*}
            (p/q)(q/p) =(-1)^{\frac{p-1}{2}\cdot\frac{q-1}{2}}.
        \end{equation*}
\end{thm}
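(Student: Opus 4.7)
The plan is to prove the law via quadratic Gauss sums, which has the pleasant feature of foreshadowing the Jacobi and Gauss sum techniques used later in the paper for the cubic analogue. Fix the odd prime $p$, let $\zeta = e^{2\pi i/p}$, and define the quadratic Gauss sum
$$g = \sum_{t=0}^{p-1} (t/p)\, \zeta^{t} \in \ZZ[\zeta].$$
The first step is to establish the fundamental identity $g^{2} = p^{\ast}$, where $p^{\ast} = (-1)^{(p-1)/2} p$. This falls out of a direct expansion of $g^{2}$, the substitution $t = su$ in the inner sum, multiplicativity of the Legendre symbol, and the orthogonality relation $\sum_{s \bmod p} \zeta^{as} = 0$ for $p \nmid a$, together with the evaluation $\sum_{u=1}^{p-1}(u/p) = 0$.

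The heart of the argument is a two-sided computation of $g^{q-1}$ performed inside the ring $\ZZ[\zeta]/q\ZZ[\zeta]$. On the one hand, Euler's criterion gives
$$g^{q-1} = (g^{2})^{(q-1)/2} = (p^{\ast})^{(q-1)/2} \equiv (p^{\ast}/q) \pmod{q}.$$
On the other hand, because $q$ is prime, the binomial theorem modulo $q$ yields $(a+b)^{q} \equiv a^{q} + b^{q} \pmod{q}$ in $\ZZ[\zeta]$, and combined with $(t/p)^{q} = (t/p)$ this gives
$$g^{q} \equiv \sum_{t=0}^{p-1} (t/p)\, \zeta^{tq} \pmod{q}.$$
Substituting $s \equiv tq \pmod{p}$ inside the sum produces $g^{q} \equiv (q/p)\, g \pmod{q}$. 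Since $g^{2} = p^{\ast}$ is a unit modulo $q$ (using $p \ne q$), we may divide by $g$ to obtain $g^{q-1} \equiv (q/p) \pmod{q}$.

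Combining the two computations gives $(p^{\ast}/q) \equiv (q/p) \pmod{q}$, and since each side is $\pm 1$ while $q \geq 3$, this is an equality in $\ZZ$. A final elementary step factors $(p^{\ast}/q) = ((-1)^{(p-1)/2}/q)(p/q)$ and invokes the first supplementary law $(-1/q) = (-1)^{(q-1)/2}$ to rearrange into the stated formula $(p/q)(q/p) = (-1)^{\frac{p-1}{2}\frac{q-1}{2}}$.

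The main obstacle is doing the arithmetic in $\ZZ[\zeta]/q\ZZ[\zeta]$ carefully: one must confirm that reduction mod $q$ is well-defined on the cyclotomic ring, that $g$ is in fact a unit modulo $q$, and that a congruence between two elements of $\{\pm 1\}$ modulo an odd prime forces equality of integers. The single genuinely computational step is the identity $g^{2} = p^{\ast}$; the celebrated question of determining the sign of $g$ itself (which Gauss famously pursued for several years) is not required here, since only $g^{2}$ enters the argument.
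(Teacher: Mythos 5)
Your argument is correct, but note that the paper itself never proves Theorem~\ref{the law of quadratic reciprocity}: it is stated as classical background (with a nod to Gauss's eight proofs and a footnote deferring the Gauss-sum proof to Ireland and Rosen), and the only supplement the paper actually proves is the quadratic character of $2$, by Gauss's strong-induction argument. What you have written is essentially Gauss's sixth proof. The key identity $g^{2}=(-1)^{(p-1)/2}p$ is right, the two evaluations of $g^{q}$ modulo $q\ZZ[\zeta]$ are carried out correctly (the freshman's-dream step and the substitution $t\mapsto tq$, which is the quadratic case of the paper's Lemma~\ref{Lemma: two properties of the Gauss sum}(1), $g_{a}(\chi)=\overline{\chi(a)}g(\chi)$), and you correctly flag the two points where care is needed: $g$ is invertible in $\ZZ[\zeta]/q\ZZ[\zeta]$ because $g^{2}=p^{*}$ is prime to $q$, and a congruence between elements of $\{\pm1\}$ modulo $q$ descends to an equality in $\ZZ$ because $q\ZZ[\zeta]\cap\ZZ=q\ZZ$ and $q>2$. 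Your route also harmonizes with the paper's later development (the cubic analogue replaces $g^{2}=p^{*}$ with $|g(\chi)|^{2}=p$ and the Jacobi sum), so it is a natural proof to supply here; the only cosmetic caveat is that the theorem as stated omits the hypothesis $p\neq q$, which your proof (correctly) uses.
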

While there exist multiple forms of the law, this form, which was first expressed by Adrien-Marie Legendre, is certainly the most elegant. Gauss proved this magnificent result a total of 8 times. When the law was initially stated, there were also two supplementary laws. In this paper, we are most interested in the second.
\begin{thm}[Second Supplement to Theorem \ref{the law of quadratic reciprocity}] \label{supplement to quadratic reciprocity}
    Let $p$ be an odd prime. Then
        \begin{equation*}
            (2/p) = (-1)^{\frac{p^{2}-1}{8}}.
        \end{equation*}
\end{thm}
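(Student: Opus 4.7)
The plan is to prove the supplement via the quadratic Gauss sum inside the eighth cyclotomic ring, a template that anticipates the Gauss and Jacobi sum techniques used later in the paper for the cubic analogue. Let $\zeta = e^{2\pi i/8}$, and work inside $\mathbb{Z}[\zeta]$. Define the resolvent $\tau = \zeta + \zeta^{-1}$. Because $\zeta^2 + \zeta^{-2} = 2\cos(\pi/2) = 0$, a direct expansion gives $\tau^2 = 2$, so $\tau$ realises $\sqrt{2}$ explicitly inside a cyclotomic field.

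The strategy is to compute $\tau^p$ modulo $p$ in two genuinely different ways and then compare. Using $\tau^2 = 2$ together with Euler's criterion,
\begin{equation*}
\tau^p \;=\; \tau\cdot(\tau^2)^{(p-1)/2} \;=\; 2^{(p-1)/2}\,\tau \;\equiv\; (2/p)\,\tau \pmod{p}.
\end{equation*}
Independently, the freshman's-dream identity $(a+b)^p \equiv a^p + b^p \pmod p$, valid in any commutative ring such as $\mathbb{Z}[\zeta]$, gives $\tau^p \equiv \zeta^p + \zeta^{-p} \pmod p$. Since $\zeta^8 = 1$, the quantity $\zeta^p + \zeta^{-p}$ depends only on $p$ modulo $8$: it equals $\tau$ when $p \equiv \pm 1 \pmod 8$ and $-\tau$ when $p \equiv \pm 3 \pmod 8$. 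Calling this sign $\varepsilon_p \in \{\pm 1\}$, the two computations combine into $(2/p)\,\tau \equiv \varepsilon_p\,\tau \pmod p$.

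To promote this to a statement about integers, I would multiply through by $\tau$ and invoke $\tau^2 = 2$, obtaining $2\cdot(2/p) \equiv 2\varepsilon_p \pmod p$. Since $p$ is odd, $2$ is a unit modulo $p$ and may be cancelled, giving $(2/p) \equiv \varepsilon_p \pmod p$; both sides lie in $\{\pm 1\}$ and $p > 2$, so equality holds in $\mathbb{Z}$. A four-case check of $p$ modulo $8$ then matches $\varepsilon_p$ with $(-1)^{(p^2-1)/8}$: the exponent is even precisely when $p \equiv \pm 1 \pmod 8$, which is exactly the case where $\varepsilon_p = +1$.

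The main obstacle is the transition from the congruence $(2/p)\,\tau \equiv \varepsilon_p\,\tau \pmod p$ living in $\mathbb{Z}[\zeta]$ to an honest statement about integer values of the Legendre symbol, because $\tau$ is not itself rational. The trick is the fortunate fact that $\tau^2 = 2$ is a rational integer coprime to $p$, which permits squaring and then cancellation. Everything else --- evaluating the resolvent's square, applying the freshman's dream in characteristic $p$, and the final case analysis modulo $8$ --- is either a routine manipulation or a short, finite verification.
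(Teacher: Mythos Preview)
Your proof is correct, but it is not the route the paper takes. The paper proves the supplement by restating it as Gauss' four-part Theorem~\ref{Proposition: Gauss form of quad char of 2} and then reproducing Gauss' own argument from \textit{Disquisitiones}: a strong-induction/descent for the cases $8n+3$, $8n+5$, $8n+7$, followed by a primitive-root manipulation for $8n+1$. No complex numbers or cyclotomic fields appear; the whole thing lives in $\ZZ$.

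Your approach is the standard Gauss-sum argument in $\ZZ[\zeta_8]$, which the paper in fact alludes to in a footnote as an alternative. What you gain is brevity and a direct conceptual link to the cubic material later in the paper --- your resolvent $\tau$ with $\tau^2 = 2$ is exactly the quadratic prototype of the cubic Gauss sums the paper develops. What the paper's choice buys is historical fidelity: the whole point of that section is to exhibit Gauss' inductive style as context for how he attacked higher reciprocity, so presenting a slick cyclotomic proof there would undercut the narrative. One small remark on your write-up: the passage from $2(2/p) \equiv 2\varepsilon_p \pmod{p}$ in $\ZZ[\zeta]$ to the same congruence in $\ZZ$ quietly uses that $p\ZZ[\zeta] \cap \ZZ = p\ZZ$, which follows from $\{1,\zeta,\zeta^2,\zeta^3\}$ being a $\ZZ$-basis; you flag this transition as the main obstacle but could name the reason more explicitly.
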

This theorem concerns the solvability of the quadratic congruence $x^{2}\equiv 2\pmod{p}$, and its proof is infrequently touched upon in elementary number theory classes. The solvability of this congruence is referred to as the \textit{quadratic character of 2}; the reason for this language will be explained later. It is one of the two main results that we are interested in proving in a part of this paper\footnote{Interestingly, problem number 4 on the 2023 Moroccan Baccalaureate, a graduation exam given each year to hundreds of thousands of Moroccan students at the end of secondary school, asked students to provide a proof of this result. Remarkably, there are numerous elementary proofs of this result, most following a combinatorial argument.}.

\indent Gauss determines the quadratic character of $2$ in \textit{Disquisitiones Arithmeticae} \cite{Gauß_1986} and proves it using induction. While the conventional proof seen in most number theory texts uses Gauss' Lemma and a combinatorial argument, Gauss' inductive proof informs us exactly how he approached proving numerous other laws concerning residues and nonresidues (most notably, quadratic reciprocity). We give Gauss' proof from \textit{Disquisitiones} in the next section.

\indent Even fewer introductory university classes cover special cases of higher reciprocity. Nonetheless, we can still use elementary results to get a taste of the consequences of cubic reciprocity by considering the solvability of the special cubic congruence $x^{3}\equiv 2\pmod{p}$. The solvability of this congruence is referred to as the \textit{cubic character of 2}. After proving quadratic reciprocity, Gauss took an interest in higher reciprocity. In particular, he closely studied the solvability of congruences of the form $x^{3}\equiv a\pmod{p}$ and $x^{4}\equiv a\pmod{p}$; respectively, these are known as cubic and biquadratic reciprocity. Despite Gauss' fascination with these congruences, his theorems on cubic and biquadratic residues were only conjectures. In his second monograph in which he also considered biquadratic reciprocity, Gauss stated in a footnote—his only published reference to cubic residues—that

\begin{quote}
    [t]he theory of cubic residues must be based in a similar way on a consideration of numbers of the form $a + bh$ where h is an imaginary root of the equation $h^{3}-1 = 0$, say $h=(-1+\sqrt{-3})/2$, and similarly the theory of residues of higher powers leads to the introduction of other imaginary quantities \cite{biblioteca1863werke2} (translated \cite[pg. 84]{cox2013primes}).
\end{quote}

As we know it today, the value $``h"$ that Gauss was referring to is $\omega$, a cube root of unity, and the set of numbers of the form $a+bh$ is now known as the Eisenstein integers. Gauss' speculation came to no fruition, and it wasn't until 1844 that cubic reciprocity was first proven by Gotthold Eisenstein, who also happened to be one of Gauss' students. There is, however, some debate regarding whether Gauss' notes containing a proof of the law—which eventually appeared in posthumous monographs—were written before or after Eisenstein published his manuscripts. In June of 1844, after reading Eisenstein's manuscripts on cubic reciprocity, Gauss even invited Eisenstein to his home in Göttingen to discuss them. The proof that Eisenstein produced used primary numbers and the residue class ring $\ZZ[\omega]/\pi\ZZ[\omega]$, where $\pi$ is some prime element of $\ZZ[\omega]$. We will discuss the importance of primary numbers later when we delve into cubic reciprocity more deeply, but fundamentally they were created by Eisenstein to differentiate between particular elements of the Eisenstein integers with respect to unique factorization. Eisenstein's proof also implements cubic Gauss sums, which Gauss had developed several years earlier. 

\indent In perfect analogy with the Legendre symbol, we let $(\alpha/\pi)_{3}$ be the cubic character of $\alpha$ for some prime element $\pi$ and $\alpha$ an arbitrary element of $\ZZ[\omega]$. The law of cubic reciprocity can be stated as follows.

\begin{thm}[The Law of Cubic Reciprocity] \label{The Law of Cubic Reciprocity}
    Let $\pi_{1}$ and $\pi_{2}$ be primary elements of $\ZZ[\omega]$. Furthermore, let $N\pi_{1},N\pi_{2}\neq 3$ and $N\pi_{1}\neq N\pi_{2}$, or unequal norms\footnote{The case when $N\pi_{1}=N\pi_{2}$ is an interesting nontrivial problem relating closely to cubic diophantine equations, but it lies outside the scope of this paper.}. Then 
    \begin{equation*}
        (\pi_{1}/\pi_{2})_{3} = (\pi_{2}/\pi_{1})_{3}.
    \end{equation*}
\end{thm}

The proof of cubic reciprocity is by no means simple and may be found completely in \cite{relyea2024finitefieldshigherreciprocity} as well as in chapter 9 of \cite{ireland1990classical}.

\indent While the main supplements of cubic reciprocity concern prime elements of the Eisenstein integers, the case of $2$ is still incredibly nuanced. Concerning the cubic character of 2, Gauss wrote, in his posthumously published \textit{Werke VIII}, that 

\begin{quote}
    2 is a cubic residue or nonresidue of a prime number $p$ of the form $3n+1$, according to whether $p$ is representable by the form $xx+27yy$ or $4xx+2xy+7yy$ \cite{gauss1900werke8} (translated \cite[pg. 85]{cox2013primes}).
\end{quote}

The form $xx+27yy$ is very similar to the form of primes in a result concerning the cubic character of 2 conjectured by Leonhard Euler between 1748 and 1750. At the time, a well known result of Pierre de Fermat—a consequence of his theorem on the sum of two squares—was that if $p\equiv 1\pmod{3}$, then for $a,b\in\ZZ$, the representation $p=a^{2}+3b^{2}$ was unique up to sign. Using intuition from Fermat's theorem, Euler stated, among other cubic characters of $3,5,6,7,$ and $10$, that $(2/p)=1$ if and only if $3|b$, where $p$ was in the aforementioned representation $a^{2}+3b^{2}$ (see chapter 7 of \cite{lemmermeyer2000reciprocity}). This result is the main theorem of this paper, and its formal proof is laid out in the final section. The main theorem is as follows.
\begin{thm} \label{Theorem: MAIN THEOREM}
    If $p\equiv 1\pmod{3}$, then the cubic congruence $x^{3}\equiv 2\pmod{p}$ is solvable if and only if there exist integers $C$ and $D$ such that $p=C^{2}+27D^{2}$. 
\end{thm}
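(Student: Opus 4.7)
The plan is to pass from $\mathbb{Z}$ to the Eisenstein integers $\mathbb{Z}[\omega]$ and reduce the question to evaluating a cubic residue symbol at $2$. Since $p \equiv 1 \pmod 3$, $p$ splits in $\mathbb{Z}[\omega]$ as $p = \pi\bar{\pi}$, and after replacing $\pi$ by its unique primary associate we may assume $\pi \equiv -1 \pmod 3$. Writing $\pi = a + b\omega$ this forces $a \equiv 2 \pmod 3$ and $b \equiv 0 \pmod 3$, so set $b = 3m$ with $m \in \mathbb{Z}$. Because $\mathbb{Z}[\omega]/\pi \cong \mathbb{Z}/p\mathbb{Z}$ when $\pi$ is split, the congruence $x^3 \equiv 2 \pmod p$ is solvable in $\mathbb{Z}$ if and only if $(2/\pi)_3 = 1$, so the whole problem is to evaluate this symbol.

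Next, I would extract the quadratic form of interest from the norm of $\pi$. Expanding $p = N(\pi) = a^2 - ab + b^2$ and multiplying by four yields
\begin{equation*}
    4p = (2a - b)^2 + 3b^2 = (2a - 3m)^2 + 27 m^2,
\end{equation*}
so $4p = L^2 + 27 M^2$ with $L := 2a - 3m \equiv 1 \pmod 3$ and $M := m$. This representation is essentially unique among pairs with $L \equiv 1 \pmod 3$. From it, $p = C^2 + 27 D^2$ is solvable in integers if and only if both $L$ and $M$ are even: halving gives one direction, and uniqueness forces the other. Now if $M = m$ is even then $b$ is even, so $N(\pi) \equiv a^2 \pmod 2$ must equal the odd prime $p$, which forces $a$ odd and hence $L = 2a - 3m$ automatically even. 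Thus the existence of $(C, D)$ is equivalent to the single condition ``$m$ even,'' which is in turn equivalent to $\pi \equiv 1 \pmod 2$ in $\mathbb{Z}[\omega]$ (the other parities of $(a, b)$ yield $\pi \equiv \omega$ or $\pi \equiv 1 + \omega = \omega^2$ in $\mathbb{F}_4$).

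The heart of the argument is then the supplementary evaluation $(2/\pi)_3 = 1 \iff \pi \equiv 1 \pmod 2$. Observe that $2 \equiv -1 \pmod 3$, so $2$ is itself a primary element of $\mathbb{Z}[\omega]$, and $N(2) = 4$ satisfies $N(2) \neq 3$ and $N(2) \neq p = N(\pi)$. Hence Theorem \ref{The Law of Cubic Reciprocity} applies and gives $(2/\pi)_3 = (\pi/2)_3$. Because $x^2 + x + 1$ is irreducible modulo $2$, the prime $2$ is inert in $\mathbb{Z}[\omega]$ and the residue ring $\mathbb{Z}[\omega]/2\mathbb{Z}[\omega]$ is $\mathbb{F}_4$, whose multiplicative group has order $3$ and is cyclic by Theorem \ref{Theorem: the multiplicative group of a finite field is cyclic}. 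Under the canonical identification $\mathbb{F}_4^\times = \{1, \omega, \omega^2\}$ with the group of cube roots of unity we therefore have
\begin{equation*}
    (\pi/2)_3 \equiv \pi^{(N(2) - 1)/3} = \pi \pmod 2,
\end{equation*}
and this cube root of unity equals $1$ exactly when $\pi \equiv 1 \pmod 2$.

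Chaining the equivalences,
\begin{equation*}
    x^3 \equiv 2 \pmod p \text{ solvable} \iff (2/\pi)_3 = 1 \iff \pi \equiv 1 \pmod 2 \iff m \text{ even} \iff p = C^2 + 27 D^2,
\end{equation*}
yields the theorem. I expect the main obstacle to be the supplementary evaluation $(\pi/2)_3 = \pi \bmod 2$: it requires applying cubic reciprocity in the case where one of the primes is inert (in its guise as a primary element of $\mathbb{Z}[\omega]$) and carefully matching the three classes of $\mathbb{F}_4^\times$ with the cube roots of unity so that the value $1$ on the character side genuinely corresponds to $\pi \equiv 1 \pmod 2$ rather than to one of the other residues.
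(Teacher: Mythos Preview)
Your proposal is correct and follows essentially the same route as the paper: both arguments pivot on cubic reciprocity to rewrite $(2/\pi)_3=(\pi/2)_3$, evaluate the latter as $\pi\bmod 2$ in $\mathbb{F}_4$ (the paper isolates this as Lemma~\ref{Lemma: pi=1 mod 2 for x^3=2 mod pi to be solvable}), and then match the condition $\pi\equiv 1\pmod 2$ against the unique representation $4p=A^2+27B^2$ from Theorem~\ref{Theorem: exist unique integers st A^2+27B^2=4p} via a parity analysis. Your presentation is somewhat tidier---you extract $b=3m$ from the primary condition at the outset and invoke the isomorphism $\ZZ[\omega]/\pi\cong\ZZ/p\ZZ$ once to handle the passage between $\bmod\ p$ and $\bmod\ \pi$ in both directions simultaneously---but the substance is the same.
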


While this result \textit{was} conjectured by Euler as a result of his ``genius," he did not provide a proof. Even though Theorem \ref{Theorem: MAIN THEOREM} was eventually proven by Gauss in posthumously published notes, this paper aims to illuminate the historical nuance of contributions from other mathematicians leading up to Gauss' work on the cubic character of 2, and more generally on cubic and biquadratic reciprocity. 

\section*{The Quadratic Character of 2}
Let us take a brief look at Gauss' evaluation of the quadratic character of 2. 

\indent Notice that Theorem \ref{supplement to quadratic reciprocity} alternatively states that if $p\equiv \pm 1\pmod{8}$, then $2$ is a quadratic residue modulo $p$, and if $p\equiv \pm 3\pmod{8}$, then $2$ is a quadratic nonresidue. Gauss' statement of this result in \textit{Disquisitiones} divides the entire result into multiple articles, but it is unsurprising considering this was how he addressed numerous problems. In condensed form, Gauss states the following.
\begin{thm}[Gauss' equivalent statement of Theorem \ref{supplement to quadratic reciprocity} \cite{Gauß_1986} \label{Proposition: Gauss form of quad char of 2}]
Let $n\in\ZZ$.
    \begin{enumerate}
        \item $+2$ will be a nonresidue, $-2$ a residue of all prime numbers of the form $8n+3$,
        \item $+2$ and $-2$ will both be nonresidues of all prime numbers of the form $8n+5$,
        \item $-2$ is a nonresidue, $+2$ a residue of all prime numbers of the form $8n+7$,
        \item $2$ and $-2$ are residues of all prime numbers of the form $8n+1$. 
    \end{enumerate}
\end{thm}

The concept of Gauss' proof is, as Gauss states, by ``induction." However, it is more accurate to describe his proof as by strong induction with a flavor of contradiction. 

\begin{proof}[Proof of Theorem \ref{Proposition: Gauss form of quad char of 2}]
    Let $n\in\ZZ$ in the following. We will prove each part individually. 
    
    \indent We prove (1) and (2) first. The first thing to notice is that if a composite number is of the form either $8n+3$ or $8n+5$, then it must involve some prime factor that is of either the form $8n+3$ or $8n+5$. Otherwise, the composite numbers would instead be in the form $8n+1$ or $8n+7$. Gauss states that there is no number of either form less than $100$ such that $2$ is a residue. The choice of $100$ is arbitrary, and we could have just as easily chosen $97$ or $235$. Now we suppose that there are numbers of this form such that $2$ is a residue, and we let the least be $t$. This $t$ must be of the form $8n+3$ or $8n+5$. Clearly, $2$ will be a residue of $t$ but a nonresidue of everything less than $t$. By definition, $2\equiv a^{2}\pmod{t}$. There must exist an odd $a<t$ such that this is true. Rearrange, so $a^{2}=2+tu$ for some integer $u$. Then $tu=a^{2}-2$. Notice that $a^{2}$ is in the form $8n+1$. Then $tu$ is of the form $=8n+1-2=8n-1$. Thus $u$ will be of the form $8n+3$ or $8n+5$, depending on whether $t$ is in the form $8n+5$ or $8n+3$ respectively. However, $a^{2}=2+tu$ also implies that $2\equiv a^{2}\pmod{u}$, or $2$ is a residue of $u$. Clearly, $u<t$, so there is a contradiction. 
    
    \indent The proof of (3) is similar. Every composite number of the form $8n+5$ or $8n+7$ must involve a prime factor of either form, so $-2$ cannot be a residue of a number of the form $8n+5$ or $8n+7$. Suppose on the contrary that some did exist, with $t$ as the least. We repeat the procedure and arrive at the congruence $-2=a^{2}-tu$. If $a<t$ and odd, then $u$ will be of the form $8n+5$ or $8n+7$, depending on whether $t$ is of the form $8n+7$ or $8n+5$ respectively. However, $a^{2}+2=tu$ implies that $-2\equiv a^{2}\pmod{u}$, or $-2$ is a residue of $u$. Clearly, $u<t$, so there is a contradiction. 
    
    \indent The case for (4) is trickier. Instead of using the strong induction approach, Gauss makes a clever deduction using congruences. We begin by letting $8n+1$ be some prime and $a$ be a primitive root modulo $8n+1$. Since $a$ is a primitive root modulo $8n+1$, we know that $a^{(8n+1)-1}=a^{8n}\equiv 1\pmod{8n+1}$. We can factor the left-hand-side as $(a^{4n})^{2}\equiv 1\pmod{8n+1}$. This is only possible if $a^{4n}\equiv \pm 1\pmod{8n+1}$, but we take the negative part. Therefore $a^{4n}\equiv -1\pmod{8n+1}$, or $a^{4n}+1\equiv 0\pmod{8n+1}$. Adding $2a^{2n}$ to both sides, we obtain $a^{4n}+2a^{2n}+1\equiv 0\pmod{8n+1}$, and after factoring we obtain $(a^{2n}+1)^{2}\equiv 2a^{2n}\pmod{8n+1}$. Alternatively, we subtract $2a^{2n}$ from both sides. After factoring we obtain $(a^{2n}-1)^{2}\equiv -2a^{2n}\pmod{8n+1}$. In either case, we have that $\pm 2a^{2n}$ are both quadratic residues. In other words,
    \begin{equation*}
        \left( \frac{\pm 2a^{2n}}{8n+1} \right) = \left( \frac{\pm 2}{8n+1} \right)\left( \frac{a^{2n}}{8n+1} \right) = \left( \frac{\pm 2}{8n+1} \right)\left( \frac{(a^{n})^{2}}{8n+1} \right) =  \left( \frac{\pm 2}{8n+1} \right) = 1.
    \end{equation*}
    Therefore both $2$ and $-2$ are quadratic residues modulo $8n+1$. 
\end{proof}
It is worth noting that Gauss didn't use Legendre symbols, as they were not introduced until 1798 when Legendre first formally defined them\footnote{While Gauss managed to prove the quadratic character of 2 using induction, Legendre was focused on the quadratic character of 2 using techniques stemming from quadratic forms, a technique that Gauss succeeded in implementing in his proof of quadratic reciprocity in \textit{Disquisitiones}. Legendre's proof differs drastically from Gauss', but it is interesting nonetheless (see \cite{collison1977cubbi} for a detailed overview).}. 
\begin{ex}
    Say we wish to determine whether there are solutions to the quadratic congruence $x^{2}\equiv 2\pmod{11}$. We can evaluate this as $(2/11)=(-1)^{\frac{11^{2}-1}{8}}=(-1)^{15}=-1$, so there is no solution. Equivalently, if we instead implement Theorem \ref{Proposition: Gauss form of quad char of 2}, we see that $11$ is of the form $8(1)+3$, so $2$ must be a nonresidue. 
\end{ex}

\section*{Eisenstein Integers, Gauss and Jacobi Sums, and the Uniqueness of a Representation of Primes}
Gauss also had considerations for the quadratic characters of other special integers in \textit{Disquisitiones}, most using similar ideas of induction. Overall, these proofs cannot be described as anything more than elementary. Yet, these investigations were quintessential to the expansion of number theory in the early 19th century. 

\indent Gauss had already given various proofs for the quadratic characters of special integers and proved quadratic reciprocity in two different ways—one by induction and the other by quadratic forms, yet—he was unsatisfied and continued to pursue more proofs. In his second memoir, Gauss stated that he 

\begin{quote}
    ... sought to add more and more proofs of the already-known theorems on quadratic residues, in the hope that from these many different methods, one or another could illuminate something in the related circumstances \cite{biblioteca1863werke2} (translated \cite[pg. 333]{Gray_2018}).
\end{quote}

At the time, Gauss was thinking extensively about cubic and biquadratic reciprocity and believed that the key to confronting these ``mysteries of the higher arithmetic" was intimately related to quadratic reciprocity. Gauss presented his 6th proof of quadratic reciprocity in 1807, and it used a new technique. In Gauss' words, 

\begin{quote}
    ... the sixth proof calls upon a completely different and most subtle principle, and gives a new example of the wonderful connection between arithmetic truths that at first glance seem to lie very far from one another \cite{biblioteca1863werke2} (translated \cite[pg. 333]{Gray_2018}).
\end{quote}

In writing this proof, Gauss defined the notion of a quadratic Gauss sum\footnote{Chapter 6 of Ireland and Rosen's book \cite{ireland1990classical} brilliantly provides a modern proof of quadratic reciprocity using quadratic Gauss sums. It turns out that quadratic Gauss sums are so powerful that they also offer an incredibly comprehensive proof of the quadratic character of 2, requiring only some theorems regarding algebraic integers. A proof can be found in the same chapter.}. While Gauss was ultimately unable to apply his newfound quadratic ``Gauss" sum to proofs of cubic and biquadratic reciprocity, his formulations of the Gauss sum were used by many of his colleagues and students, most notably by Eisenstein in his successful attempts at proving higher reciprocity laws.

\indent The idea surrounding Gauss sums is surprisingly difficult to motivate. In fact, they are so difficult to motivate within the scope of this paper that we only offer some insight into how they were developed, but in messing with the technical details, we will take its conceptual foundations for granted. In this section, we are largely concerned with proving the uniqueness of primes of the form described in Theorem \ref{Theorem: MAIN THEOREM}. On the way, we will take a look at Gauss sums and Jacobi sums and some of their fundamental properties. 

\indent As indicated by Gauss, and as mentioned earlier in \cite{biblioteca1863werke2}, considerations of cubic congruences would likely involve complex numbers with a cube root of unity. Before we prove the elegant properties of Gauss sums, we are first inclined to formally investigate the Eisenstein integers. 

\subsection*{The Eisenstein Integers}
The most significant difference between quadratic reciprocity and higher reciprocity is the use of complex numbers. Whereas quadratic reciprocity can be expressed in elementary terms over $\ZZ$, higher reciprocity (specifically Eisenstein reciprocity) is expressed over the $m$th cyclotomic field $\QQ[\zeta_{m}]$ for an integer $m>1$. The focus of this paper is on cubic reciprocity, which takes arguments from the ring $\ZZ[\omega]$, the Eisenstein integers. Every Eisenstein integer can be expressed in the form $a+b\omega$, where $a$ and $b$ are integers and $\omega=\zeta_{3}$ is a cube root of unity. We take $\omega$ to be $-1/2+i\sqrt{3}/2$.

\indent The most important property of the ring $\ZZ[\omega]$ is that it forms a unique factorization domain, or UFD. It is possible to show that $\ZZ[\omega]$ is also a Euclidean domain, which implies that there need exist a norm function over $\ZZ[\omega]$ that maps elements of $\ZZ[\omega]$ to elements of $\ZZ$. Let $\alpha=a+b\omega$. It happens that the norm over $\ZZ[\omega]$ is defined such that $N\alpha = \alpha\overline{\alpha} = (a+b\omega)(a+b\omega^{2}) = a^{2}-ab+b^{2}$, and we will prove that this norm is uniquely expressible for some $a$ and $b$ in Proposition \ref{Proposition: exist integers a and b st a^2-ab+b^2}.

\indent Necessarily, the ring $\ZZ[\omega]$ contains prime and unit elements. To determine the unit elements of $\ZZ[\omega]$, we let $a+b\omega$ be a unit. Therefore, its norm must be $1$, so determining the units amounts to determining the pairs of values of $a$ and $b$ such that $N(a+b\omega) = a^{2}-ab+b^{2}=1$. It is important to note that $\ZZ[\omega]$ also contains $\ZZ$, so to distinguish between elements of each set, we say that prime elements of $\ZZ$ are ``rational primes" and prime elements of $\ZZ[\omega]$ are just ``primes"\footnote{It is worth noting an important identity of cube roots of unity: $1+\omega+\omega^{2} = 0$. \label{Footnote: sum of cube roots of unity is 0}}.

\indent Now that we have a grasp of the basics of the Eisenstein integers, we will redirect ourselves toward considering some results concerning general Gauss and Jacobi sums. These results are technical and time-consuming, but most results are analogues to results concerning quadratic Gauss sums and follow general yet still similar proofs. Before we may take a look, however, we need to define a special symbol that characterizes (no pun intended) all residue symbols. 

\subsection*{Multiplicative Characters}
We are already familiar with the quadratic residue symbol, or the Legendre symbol $(a/p)$. This symbol behaves in a simple way: you choose some integer $a$, and you map it to either $-1,0$, or $1$. In this way, the Legendre symbol is a map between the integers and the set $\{ -1,0,1 \}$. In the case of $n=3$, the inputs can now be Eisenstein integers, with outputs as $0$ and the cube roots of unity, or $\{ 0,1,\omega,\omega^{2} \}$. As we continue increasing $n$, the domain and codomain expand even further. 

Now is a good time to mention finite fields, most notably the multiplicative group of a finite field. There are many interesting results regarding finite fields (see \cite{relyea2024finitefieldshigherreciprocity}), but the result that concerns us the most is

\begin{thm} \label{Theorem: the multiplicative group of a finite field is cyclic}
    The multiplicative group of a finite field is cyclic.
\end{thm}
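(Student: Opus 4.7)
The plan is to exploit the crucial difference between an arbitrary finite abelian group and the multiplicative group of a field: in a field, a polynomial of degree $d$ has at most $d$ roots. Fix a finite field $F$ with $q$ elements, and set $G=F^{\ast}$, an abelian group of order $n=q-1$. I want to produce an element of order exactly $n$.

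First I would introduce the \emph{exponent} of $G$, namely $m=\mathrm{lcm}\{\ord(x):x\in G\}$. By Lagrange, every $\ord(x)$ divides $n$, so $m\mid n$. On the other hand, every $x\in G$ satisfies $x^{m}=1$, so every element of $G$ is a root of the polynomial $X^{m}-1\in F[X]$. Because $F$ is a field, this polynomial has at most $m$ roots, which forces $n\le m$. Combining the two inequalities yields $m=n$.

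It remains to upgrade the equality $m=n$ into the existence of a single element whose order is $n$, and this is the step I expect to require the most care, since in a general (nonabelian) group the exponent need not be realized by any element. The key lemma is: in a \emph{finite abelian} group, if $a$ has order $r$ and $b$ has order $s$, then there exists an element of order $\mathrm{lcm}(r,s)$. I would prove this by writing $\mathrm{lcm}(r,s)=r's'$ with $r'\mid r$, $s'\mid s$, and $\gcd(r',s')=1$; then $a^{r/r'}$ has order $r'$, $b^{s/s'}$ has order $s'$, and because they commute and their orders are coprime, their product has order $r's'$. Iterating this lemma over a generating set of orders shows that the exponent $m$ is itself the order of some element $g\in G$.

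Putting the pieces together, $g$ has order $m=n=|G|$, so $G=\langle g\rangle$ is cyclic. The main obstacle, as indicated, is the coprime-order construction inside the abelian lemma; everything else is essentially bookkeeping with Lagrange's theorem and the degree bound on roots of polynomials over a field. Note that the finiteness of $F$ is used only to guarantee that $G$ is finite (so that the exponent exists as a well-defined positive integer); the argument goes through unchanged for any finite subgroup of the multiplicative group of an arbitrary field.
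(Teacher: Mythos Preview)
Your argument is correct. The paper does not actually supply a proof of this theorem; it only remarks that ``it can be proven using M\"obius inversion'' and then moves on. So there is no line-by-line comparison to make, but the two strategies are genuinely different and it is worth noting how.

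The M\"obius-inversion route the paper alludes to typically runs as follows: for $d\mid n=|G|$ one shows $X^{d}-1$ divides $X^{n}-1$ in $F[X]$, and since $X^{n}-1$ splits completely over $F$ (its roots are exactly the $n$ elements of $G$), the factor $X^{d}-1$ has exactly $d$ roots. Letting $\psi(d)$ be the number of elements of order exactly $d$, one has $d=\sum_{e\mid d}\psi(e)$, and M\"obius inversion gives $\psi(d)=\sum_{e\mid d}\mu(d/e)e=\varphi(d)$; in particular $\psi(n)=\varphi(n)>0$. Your approach instead isolates the \emph{exponent} $m$ of $G$, uses the root bound once to force $m=n$, and then invokes the abelian lemma that the exponent is always attained. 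Both proofs hinge on the same field-theoretic fact (bounded number of roots), but yours packages the combinatorics into a single structural statement about abelian groups rather than a counting identity. A pleasant by-product you already noted is that your argument applies verbatim to any finite subgroup of $K^{\times}$ for an arbitrary field $K$, which the M\"obius argument also gives but less transparently.

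One small point of exposition: the decomposition $\mathrm{lcm}(r,s)=r's'$ with $r'\mid r$, $s'\mid s$, $\gcd(r',s')=1$ deserves a one-line justification (assign each prime $p$ to whichever of $r,s$ carries the larger $p$-adic valuation); otherwise the lemma is complete as stated.
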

This result is the building block for any construction using finite fields that we have. The conventional proof follows by Möbius inversion. This theorem also has other far-reaching consequences, both elementary and complex\footnote{A good elementary application of this result is that $(\ZZ/p\ZZ)^{\times}$ is cyclic. It is well known that $\ZZ/p\ZZ$ is a finite field. Since $\ZZ/p\ZZ$ is a finite field, by Theorem \ref{Theorem: the multiplicative group of a finite field is cyclic} its multiplicative group, $(\ZZ/p\ZZ)^{\times}$, is cyclic with order $p-1$. Using this definition, Fermat's Little Theorem also becomes a direct corollary.\label{Footnote: mult group of int mod p is cyclic}}.

In general, the map described earlier can be defined as some mapping from the multiplicative group of a finite field to specific nonzero complex numbers. This is what we meant by ``character." More formally,
\begin{defn}[Multiplicative character]
    We define the \textit{multiplicative character} on a finite field $\mathbb{F}_{q}$ to be a map $\chi$ from the multiplicative group $\mathbb{F}_{q}^{\times}$ to the nonzero complex numbers such that for all $a,b\in \mathbb{F}_{q}^{\times}$,
    \begin{equation*}
        \chi(ab)=\chi(a)\chi(b).
    \end{equation*}
\end{defn}

An important remark regarding the multiplicative character is needed.

\begin{rem} \label{Remark: character is a cyclic group}
    A character is a group homomorphism. An important fact is that multiplicative characters form a group with an identity character, $\eps$, that maps all elements to the multiplicative identity. We refer to this character as the trivial character, and it satisfies $\eps(a)=1$ for all $a\in\mathbb{F}_{q}^{\times}$. Even more surprisingly, the group of characters is cyclic, a result that is worth convincing oneself of. 
\end{rem}

Multiplicative characters are the building blocks for reciprocity laws. Though applications of the multiplicative character are more prevalent in other areas of number theory, they are particularly interesting in reciprocity laws because of how well they describe residue symbols—in turn, this lets us construct some really interesting theory. We will start by looking at some general properties of the multiplicative character.

\indent Throughout the rest of this paper, we take $q=p^{1}$ a prime, so the finite field that we refer to is $\ZZ/p\ZZ$, but we write it as $\mathbb{F}_{p}$ and denote its multiplicative group as $\mathbb{F}_{p}^{\times}$. Since $\chi$ is a map that takes in elements of $\mathbb{F}_{p}^{\times}$, we are interested in what happens if we put in different elements $a\in\mathbb{F}_{p}^{\times}$. First and foremost, we would hope that a multiplicative character maps the identity to itself. In fact, we can write $\chi(1) = \chi(1\cdot 1) = \chi(1)\chi(1)$, so the only possible value of $\chi(1)$ is $1$, which indeed maps the identity to itself. As for any element $a$, it turns out that $\chi(a)$ is just a $(p-1)$st root of unity. This is because $a^{p-1}=1$, so $1=\chi(1)=\chi(a^{p-1})=(\chi(a))^{p-1}$. From these two facts, it is also possible to show that $\chi(a^{-1})=(\chi(a))^{-1}=\overline{\chi(a)}$.

\indent An interesting fact about the Legendre symbol is that the sum of all Legendre symbols with arguments ranging from $0$ to $p-1$ is $0$. Perhaps a little unsurprisingly, the general multiplicative character satisfies the same property, as we see in the following.

\begin{prop} \label{Proposition: proof about sum of elements evaluated by a character prop}
    Let $\chi$ be a multiplicative character. If $\chi\neq\eps$, the trivial multiplicative character, then $\sum_{t\in \mathbb{F}_{p}}\chi(t)=0$. Otherwise, the sum is $p$. 
\end{prop}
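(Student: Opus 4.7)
The plan is to split into two cases. The trivial case is essentially immediate: under the standard convention that $\eps(t)=1$ for every $t\in\FF_{p}$ (extending the character from $\FF_{p}^{\times}$ by setting $\eps(0)=1$), the sum $\sum_{t\in\FF_{p}}\eps(t)$ is a sum of $p$ ones and hence equals $p$. For the nontrivial case I would extend $\chi$ by $\chi(0)=0$, which is the usual convention compatible with multiplicativity, so that the sum over all of $\FF_{p}$ reduces to the sum over $\FF_{p}^{\times}$.

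The heart of the argument is then a standard ``translation'' trick for character sums over a finite abelian group. Since $\chi\neq\eps$, there must exist some $a\in\FF_{p}^{\times}$ with $\chi(a)\neq 1$, for otherwise $\chi$ would agree with $\eps$ everywhere on $\FF_{p}^{\times}$. Writing $S=\sum_{t\in\FF_{p}^{\times}}\chi(t)$, I would use multiplicativity together with the fact that the map $t\mapsto at$ is a bijection of $\FF_{p}^{\times}$ onto itself to compute
\begin{equation*}
\chi(a)\,S \;=\; \sum_{t\in\FF_{p}^{\times}}\chi(a)\chi(t) \;=\; \sum_{t\in\FF_{p}^{\times}}\chi(at) \;=\; \sum_{u\in\FF_{p}^{\times}}\chi(u) \;=\; S.
\end{equation*}
Rearranging gives $(\chi(a)-1)S=0$, and since $\chi(a)\neq 1$ we conclude $S=0$.

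There is no serious obstacle; the whole proof is a three-line computation once the element $a$ is in hand. The only step requiring even a moment's thought is the existence of such an $a$, which is really just unpacking the definition of ``nontrivial character.'' One could alternatively invoke the cyclic structure of the character group noted in Remark \ref{Remark: character is a cyclic group} to produce a generator and extract $a$ from it, but the direct argument above is cleaner, and it has the added virtue of generalizing verbatim to any character of any finite abelian group.
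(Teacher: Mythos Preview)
Your proof is correct and follows essentially the same approach as the paper's: both handle the trivial case by summing $p$ ones, and for the nontrivial case both pick $a\in\FF_{p}^{\times}$ with $\chi(a)\neq 1$, multiply the sum by $\chi(a)$, reindex via the bijection $t\mapsto at$, and conclude from $(\chi(a)-1)S=0$. Your version is slightly more explicit about the conventions $\eps(0)=1$ and $\chi(0)=0$, but the argument is the same.
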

\begin{proof}
    The last assertion is as follows. Since $t$ runs through all elements of $\mathbb{F}_{p}$, we must have \begin{equation*}
        \sum_{t\in \mathbb{F}_{p}}\chi(t)=\sum_{t\in \mathbb{F}_{p}}\eps(t)=p.
    \end{equation*} 
    To prove the first assertion, we assume otherwise. Let there exist some $a\in \mathbb{F}_{p}^{\times}$ such that $\chi(a)\neq 1$, or $\chi$ does not map $a$ to $1$, hence $\chi$ is nontrivial. Let the desired sum be $T=\sum_{t\in \mathbb{F}_{p}}\chi(t)$. Then we may write
    \begin{equation*}
        \chi(a)T=\sum_{t\in \mathbb{F}_{p}}\chi(a)\chi(t)=\sum_{t\in \mathbb{F}_{p}}\chi(at).
    \end{equation*}
    This equates to $T$ itself as $at$ runs through the exact same number of elements from $\mathbb{F}_p$ as $t$ does, so $\chi(a)T=T$. Then $T(\chi(a)-1)=0$. We stated that necessarily $\chi(a)\neq 1$, so $T=0$, and we are finished.  
\end{proof}

We now turn our attention to Gauss and Jacobi sums. 

\subsection*{Gauss and Jacobi Sums}
We begin this technical subsection with a definition. 

\begin{defn}[Gauss sum]
    Let $\chi$ be some character on $\mathbb{F}_{p}$ and let $a\in\mathbb{F}_{p}$. Let 
    \begin{equation*}
        g_{a}(\chi)=\sum_{t\in\mathbb{F}_{p}}\chi(t)\zeta_{p}^{at},
    \end{equation*}
    where $\zeta_{p}=e^{2i\pi/p}$ is a $p$th root of unity. We say that $g_{a}(\chi)$ is a \textit{Gauss sum} on $\mathbb{F}_{p}$ belonging to the character $\chi$. 
\end{defn}

We will use $\zeta_{p}$ to always denote a $p$th root of unity from now on. By notational convention, when $a=1$, we write $g_{1}(\chi)=g(\chi)$. The next lemma provides us with a useful relationship between the Gauss sum and the character. 

\begin{lemma} \label{Lemma: two properties of the Gauss sum}
The following are true. 
    \begin{enumerate}
        \item If $a\neq 0$ and $\chi\neq \eps$, then $g_{a}(\chi)=\overline{\chi(a)}g_{1}(\chi)$. 
        \item If $a\neq 0$ and $\chi = \eps$, then $g_{a}(\eps)=0$.
    \end{enumerate}
\end{lemma}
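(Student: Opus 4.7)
The plan is to tackle the two statements in reverse order, since the second is really just a geometric-series observation, while the first is a clean change-of-variable argument.

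For part (2), I would simply unfold the definition. Since $\eps$ is the trivial character, the proof of Proposition \ref{Proposition: proof about sum of elements evaluated by a character prop} implicitly uses the convention $\eps(t)=1$ for every $t\in\mathbb{F}_p$ (otherwise that sum would be $p-1$ rather than $p$). Under this convention,
\begin{equation*}
    g_a(\eps)=\sum_{t\in\mathbb{F}_p}\zeta_p^{at}.
\end{equation*}
Because $p$ is prime and $a\not\equiv 0\pmod{p}$, the element $\zeta_p^a$ is still a primitive $p$-th root of unity, so $\zeta_p^a\neq 1$, and the sum is a finite geometric series:
\begin{equation*}
    \sum_{t=0}^{p-1}(\zeta_p^a)^t=\frac{\zeta_p^{ap}-1}{\zeta_p^a-1}=\frac{1-1}{\zeta_p^a-1}=0,
\end{equation*}
which is the desired equality.

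For part (1), my approach is the standard change of variables $u=at$ on $\mathbb{F}_p^\times$. The preliminary observation is that for a nontrivial character $\chi$ we have $\chi(0)=0$ (again by the reasoning underlying Proposition \ref{Proposition: proof about sum of elements evaluated by a character prop}), so only the nonzero $t$ contribute:
\begin{equation*}
    g_a(\chi)=\sum_{t\in\mathbb{F}_p^\times}\chi(t)\zeta_p^{at}.
\end{equation*}
Since $a\neq 0$, multiplication by $a$ permutes $\mathbb{F}_p^\times$, so as $t$ ranges over $\mathbb{F}_p^\times$, so does $u=at$, and $t=a^{-1}u$. Then $\chi(t)=\chi(a^{-1})\chi(u)$, and using the earlier observation that $\chi(a)$ is a $(p-1)$-st root of unity (so $\chi(a^{-1})=\chi(a)^{-1}=\overline{\chi(a)}$), we rewrite
\begin{equation*}
    g_a(\chi)=\sum_{u\in\mathbb{F}_p^\times}\overline{\chi(a)}\,\chi(u)\,\zeta_p^{u}=\overline{\chi(a)}\sum_{u\in\mathbb{F}_p}\chi(u)\zeta_p^{u}=\overline{\chi(a)}\,g_1(\chi),
\end{equation*}
where in the middle step I reinstate $u=0$ freely because $\chi(0)=0$.

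I do not expect any real obstacle here; both parts are essentially bookkeeping. The only subtle point worth flagging explicitly is the asymmetric convention for the value of $\chi$ at $0$ (namely $\eps(0)=1$ versus $\chi(0)=0$ for $\chi\neq\eps$), which is what makes the two cases of the lemma look so different even though the underlying idea -- a geometric/permutation argument on $\mathbb{F}_p$ -- is the same.
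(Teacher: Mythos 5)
Your proposal is correct and follows essentially the same route as the paper: part (1) rests on the substitution $u=at$ together with multiplicativity of $\chi$ (the paper just packages it as $\chi(a)g_a(\chi)=g_1(\chi)$ before dividing), and part (2) is the same geometric-series evaluation of $\sum_t \zeta_p^{at}$. Your explicit remark about the convention $\chi(0)=0$ versus $\eps(0)=1$ is a point the paper leaves implicit, and your part (2) is slightly cleaner in that it omits the paper's superfluous case split on $a\equiv 0\pmod p$.
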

\begin{proof}
    Let us begin by proving (1). Let $a\neq 0$ and $\chi\neq \eps$. Then 
    \begin{equation*}
        \chi(a)g_{a}(\chi) = \chi(a)\sum_{t\in\mathbb{F}_{p}}\chi(t)\zeta_{p}^{at} = \sum_{t\in\mathbb{F}_{p}}\chi(a)\chi(t)\zeta_{p}^{at} = \sum_{t\in\mathbb{F}_{p}}\chi(at)\zeta_{p}^{at} = g_{1}(\chi).
    \end{equation*}
    Then $\chi(a)g_{a}(\chi) = g_{1}(\chi)$, so $g_{a}(\chi)=g_{1}(\chi)\chi(a)^{-1} = \chi(a^{-1})g_{1}(\chi) = \overline{\chi(a)}g_{1}(\chi)$. 
    
    \indent We now prove (2). Let $a\neq 0$ but $\chi=\eps$. Since $\eps$ maps all $a\in\mathbb{F}_{p}$ to $1$, we have 
    \begin{equation*}
        g_{a}(\eps) = \sum_{t\in\mathbb{F}_{p}}\eps(t)\zeta_{p}^{at} = \sum_{t\in\mathbb{F}_{p}}\zeta_{p}^{at}.
    \end{equation*}
    Recall that $\mathbb{F}_{p}$ is the integers modulo $p$, so $t$ runs through all residue class representatives, or over $0\leq t\leq p-1$. Therefore $\sum_{t\in\mathbb{F}_{p}}\zeta_{p}^{at} = \sum_{t=0}^{p-1}\zeta_{p}^{at}$. Since $a\neq 0$, we consider two cases: (a) when $a\equiv 0 \pmod{p}$ and (b) when $a\not\equiv 0\pmod{p}$. Considering (a), if $a\equiv 0 \pmod{p}$, then for some $k\in\mathbb{Z}$, we have $\zeta_{p}^{a} = (e^{2i\pi/p})^{kp} = e^{2ki\pi}=1$ for all values of $k$. Then $\sum_{t=0}^{p-1}(\zeta_{p}^{a})^{t}=1+\cdots+1=p$. We now consider (b). If $a\not\equiv 0\pmod{p}$, then we can evaluate the sum as a finite geometric series. Then, re-indexing, 
    \begin{equation*}
        \sum_{t=0}^{p-1}\zeta_{p}^{at} = \sum_{t=1}^{p}\zeta_{p}^{at} = \frac{1(1-\zeta_{p}^{ap})}{1-\zeta_{p}^{a}} = \frac{\zeta_{p}^{ap}-1}{\zeta_{p}^{a}-1}.
    \end{equation*}
    We know that $\zeta_{p}^{ap}=1$ for all $p$ prime, so $\frac{\zeta_{p}^{ap}-1}{\zeta_{p}^{a}-1}=0/(\zeta_{p}^{a}-1)=0$. 
\end{proof}

In our proof of (2), we split the evaluation of the sum into two cases with dependence on the value of $a$. This result can be rewritten in the following form. 
\begin{lemma} \label{Lemma: result about zeta from special property prop about Gauss sums about zeta from special property prop about Gauss sums}
    \begin{equation*}
        \sum_{t=0}^{p-1}\zeta_{p}^{at} = \left\{ \begin{array}{ll}
            p, & a\equiv 0\pmod{p}, \\
            0, & a\not\equiv 0\pmod{p}.
        \end{array}
        \right.
    \end{equation*}
\end{lemma}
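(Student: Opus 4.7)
The plan is simply to extract and repackage the case analysis that already appeared in the proof of part (2) of Lemma \ref{Lemma: two properties of the Gauss sum}. The statement is a pure identity about a sum of $p$th roots of unity, with no character involved, so no machinery from the Gauss sum framework is actually needed.

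First I would split on whether $p\mid a$. In the case $a\equiv 0\pmod{p}$, each exponent $at$ is a multiple of $p$, so $\zeta_p^{at}=(\zeta_p^p)^{\text{integer}}=1$; summing over the $p$ values of $t\in\{0,1,\dots,p-1\}$ immediately gives $p$. In the case $a\not\equiv 0\pmod{p}$, I would treat the sum as a finite geometric progression with common ratio $\zeta_p^{a}$. Since $\zeta_p$ is a primitive $p$th root of unity and $p\nmid a$, the ratio $\zeta_p^{a}$ is not equal to $1$, so the closed form
\begin{equation*}
\sum_{t=0}^{p-1}\zeta_{p}^{at}=\frac{\zeta_{p}^{ap}-1}{\zeta_{p}^{a}-1}
\end{equation*}
is valid. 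Then I would observe that $\zeta_p^{ap}=(\zeta_p^p)^a=1^a=1$, so the numerator vanishes while the denominator is nonzero, giving the value $0$.

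The only potential obstacle, if one can call it that, is making sure the denominator is nonzero in the geometric-series step, i.e.\ justifying that $\zeta_p^a\neq 1$ when $p\nmid a$. This follows from the fact that the order of $\zeta_p=e^{2\pi i/p}$ in $\CC^\times$ is exactly $p$, so $\zeta_p^a=1$ forces $p\mid a$. Apart from that one-line justification, the proof is essentially a rewrite of the calculation already performed in Lemma \ref{Lemma: two properties of the Gauss sum}(2), and it could be stated as a direct corollary of that computation.
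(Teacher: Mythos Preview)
Your proposal is correct and matches the paper's approach exactly: the paper does not give a separate proof of this lemma but simply remarks that the case split on $a\equiv 0\pmod{p}$ versus $a\not\equiv 0\pmod{p}$ already carried out in the proof of part (2) of Lemma \ref{Lemma: two properties of the Gauss sum} can be ``rewritten in the following form.'' Your write-up is precisely that extraction, with the added (and welcome) justification that $\zeta_p^a\neq 1$ when $p\nmid a$.
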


An easy corollary follows, where we denote the Kronecker delta with $\delta(x,y)$\footnote{Recall that we define $\delta(x,y)$ to be $1$ if $x=y$, and $0$ otherwise.}.

\begin{cor}[Corollary to Lemma \ref{Lemma: result about zeta from special property prop about Gauss sums about zeta from special property prop about Gauss sums}]\label{corollary to Lemma 3.7}
    \begin{equation*}
     p^{-1}\sum_{t=0}^{p-1}\zeta_{p}^{t(x-y)}  = \delta(x,y).
\end{equation*}
\end{cor}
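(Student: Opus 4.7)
The plan is to apply Lemma \ref{Lemma: result about zeta from special property prop about Gauss sums about zeta from special property prop about Gauss sums} directly with the substitution $a = x - y$. The entire corollary is really just a repackaging of that lemma, together with the definition of the Kronecker delta, so the proof should consist of a single case split.

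First I would set $a = x - y$ and observe that the inner sum $\sum_{t=0}^{p-1}\zeta_p^{t(x-y)}$ is exactly $\sum_{t=0}^{p-1}\zeta_p^{at}$, the object controlled by Lemma \ref{Lemma: result about zeta from special property prop about Gauss sums about zeta from special property prop about Gauss sums}. Then I would split into the two congruence cases. In the case $x \equiv y \pmod{p}$, we have $a \equiv 0 \pmod{p}$, so Lemma \ref{Lemma: result about zeta from special property prop about Gauss sums about zeta from special property prop about Gauss sums} gives the inner sum equal to $p$. Multiplying by $p^{-1}$ yields $1$, which matches $\delta(x,y) = 1$ by definition. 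In the complementary case $x \not\equiv y \pmod{p}$, we have $a \not\equiv 0 \pmod{p}$, so the inner sum vanishes, and $p^{-1} \cdot 0 = 0 = \delta(x,y)$, again matching the definition.

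There is no real obstacle here: the corollary is a one-line consequence of the previous lemma, and the only subtlety to flag is that $\delta(x,y)$ is to be interpreted as $1$ precisely when $x \equiv y \pmod{p}$ (not just when $x = y$ as integers), which is consistent with the fact that $\zeta_p^{t(x-y)}$ depends only on $x - y$ modulo $p$. Once that convention is made explicit, the two cases combine to give exactly the statement of the corollary.
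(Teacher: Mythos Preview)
Your proposal is correct and follows exactly the approach the paper indicates: the paper states that the proof ``can follow by evaluating cases when either $x\equiv y\pmod{p}$ or $x\not\equiv y\pmod{p}$,'' which is precisely the case split you perform after substituting $a = x - y$ into Lemma~\ref{Lemma: result about zeta from special property prop about Gauss sums about zeta from special property prop about Gauss sums}. Your remark that $\delta(x,y)$ should be read modulo $p$ is a helpful clarification but not a deviation from the intended argument.
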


The proof follows by evaluating cases when either $x\equiv y\pmod{p}$ or $x\not\equiv y\pmod{p}$. This is all we will need regarding the Gauss sum. 

\indent Now we define the Jacobi sum, which furthers the notion of a Gauss sum to include two characters. Gauss sums were briefly mentioned in \textit{Disquisitiones} by Gauss, but Jacobi sums only surfaced in 1827 when Carl Gustav Jacob Jacobi sent a letter to Gauss with his work. We begin with its definition. 
\begin{defn}[Jacobi sum]
    Let $\chi$ and $\lambda$ be two characters on $\mathbb{F}_{p}$. Then we define the \textit{Jacobi sum} over $\chi$ and $\lambda$ to be
    \begin{equation*}
        J(\chi,\lambda) = \sum_{\scriptsize\begin{aligned}a+b&=1 \\[-4pt] a,b &\in \mathbb{F}_{p}\end{aligned}}\chi(a)\lambda(b).
    \end{equation*}
\end{defn}

Jacobi sums are abstract in nature, so we provide an example of an elementary computation later in Example \ref{Example: computation of jacobi sum} after we have developed the necessary theory.

The defining property of Jacobi sums is their relationship with Gauss sums\footnote{Even so, the theory of Jacobi sums itself is rich, and this richness can be seen on a very high level in chapter 4 of \cite{lemmermeyer2000reciprocity}. A more modern approach to motivating Jacobi sums can be seen through determining the number of solutions to the Diophantine equation $x^{n}+y^{n}=1$, which is assessed with good rigor in chapter 8 of \cite{ireland1990classical}. A complete and rigorous treatment of Gauss and Jacobi sums with even more motivation can be found in \cite{berndt1998gauss}.}.

\begin{prop} \label{Proposition: basic properties of the jacobi sum and relation to Gauss sum}
    Let $\chi$ and $\lambda$ be characters on $\mathbb{F}_{p}$ such that neither is the trivial character $\eps$. If the composition $\chi\lambda\neq \eps$, then 
        \begin{equation*}
            J(\chi,\lambda) = \frac{g(\chi)g(\lambda)}{g(\chi\lambda)}.
        \end{equation*}
\end{prop}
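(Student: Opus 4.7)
The plan is to compute the product $g(\chi)g(\lambda)$ directly and show that it equals $J(\chi,\lambda)\,g(\chi\lambda)$; one then divides through by $g(\chi\lambda)$, which is nonzero because $\chi\lambda \neq \eps$ (the nontriviality of the character guarantees this; one may verify $|g(\chi\lambda)|^{2}=p$ separately, or simply cite it as a standard property of Gauss sums belonging to nontrivial characters). Throughout, I will use the convention $\chi(0)=0$ for any nontrivial character $\chi$, so that sums over $\mathbb{F}_{p}$ and over $\mathbb{F}_{p}^{\times}$ agree on terms involving $\chi$.

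First I would expand
\begin{equation*}
    g(\chi)g(\lambda) \;=\; \sum_{s,t\in\mathbb{F}_{p}}\chi(s)\lambda(t)\,\zeta_{p}^{s+t},
\end{equation*}
and reindex by setting $u=s+t$ to obtain
\begin{equation*}
    g(\chi)g(\lambda) \;=\; \sum_{u\in\mathbb{F}_{p}}\zeta_{p}^{u}\sum_{s\in\mathbb{F}_{p}}\chi(s)\lambda(u-s).
\end{equation*}
The argument then splits according to whether $u=0$ or $u\neq 0$. For $u=0$, the inner sum is $\sum_{s}\chi(s)\lambda(-s)=\lambda(-1)\sum_{s}(\chi\lambda)(s)$, and since $\chi\lambda\neq\eps$, Proposition \ref{Proposition: proof about sum of elements evaluated by a character prop} forces this sum to vanish.

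Next, for each fixed $u\neq 0$, I would exploit the bijection $s\mapsto ub$ on $\mathbb{F}_{p}$ (valid because $u$ is invertible), rewriting the inner sum as
\begin{equation*}
    \sum_{b\in\mathbb{F}_{p}}\chi(ub)\lambda(u(1-b)) \;=\; \chi(u)\lambda(u)\sum_{b\in\mathbb{F}_{p}}\chi(b)\lambda(1-b) \;=\; (\chi\lambda)(u)\,J(\chi,\lambda),
\end{equation*}
where the second equality uses multiplicativity of $\chi$ and $\lambda$, and the last step recognizes the remaining sum as $J(\chi,\lambda)$ (after the evident reparametrization $a=1-b$). Substituting back yields
\begin{equation*}
    g(\chi)g(\lambda) \;=\; J(\chi,\lambda)\sum_{u\neq 0}(\chi\lambda)(u)\,\zeta_{p}^{u} \;=\; J(\chi,\lambda)\,g(\chi\lambda),
\end{equation*}
and dividing gives the desired formula.

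The only subtle point — and the one I would treat most carefully — is the vanishing of the $u=0$ term, since it is precisely here that the hypothesis $\chi\lambda\neq\eps$ is used; without it, the computation would instead produce an additional $(p-1)\lambda(-1)$ contribution. The manipulations themselves are routine reindexings, but one must be vigilant about the $s=0$ and $u=0$ terms given the convention on $\chi(0)$, and about citing (or briefly verifying) that $g(\chi\lambda)\neq 0$ to justify the final division.
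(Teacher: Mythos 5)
Your proof is correct and follows essentially the same route as the paper: expand $g(\chi)g(\lambda)$ as a double sum, group terms by $u=s+t$, kill the $u=0$ term via Proposition \ref{Proposition: proof about sum of elements evaluated by a character prop} using $\chi\lambda\neq\eps$, and rescale by $u$ in the $u\neq 0$ terms to extract $(\chi\lambda)(u)J(\chi,\lambda)$. If anything, you are slightly more careful than the paper in flagging the convention $\chi(0)=0$ and the need to know $g(\chi\lambda)\neq 0$ before dividing.
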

\begin{proof}
    To begin, for $\zeta_{p}$ a $p$th root of unity, we have
    \begin{equation*}
        \label{eq:1} g(\chi)g(\lambda) = \bigg( \sum_{x}\chi(x)\zeta_{p}^{x} \bigg)\bigg( \sum_{y}\lambda(x)\zeta_{p}^{y} \bigg) = \sum_{x,y}\chi(x)\lambda(y)\zeta_{p}^{x+y} =  \sum_{t\in\mathbb{F}_{p}}\bigg(\sum_{x+y=t}\chi(x)\lambda(y)\bigg)\zeta_{p}^{t}.  
    \end{equation*}
    We consider two cases for the value of $t$. If $t=0$, then choosing to sum over $x$ and by the fact that the composition $\chi\lambda\neq \eps$, 
    \begin{equation*}
        \sum_{x+y=0}\chi(x)\lambda(y) = \sum_{x}\chi(x)\lambda(-x) = \sum_{x}\lambda(-1)\chi(x)\lambda(x) = \lambda(-1)\sum_{x}\chi\lambda(x) = 0
    \end{equation*}
    by Proposition \ref{Proposition: proof about sum of elements evaluated by a character prop}. In the case that $t\neq 0$, we define two new elements $x'$ and $y'$ as $x=tx'$ and $y=ty'$. Then, if we have $x+y=t$, then substituting we have $tx'+ty'=t$, so that $x'+y'=1$. Therefore 
    \begin{align*}
        \sum_{x+y=t}\chi(x)\lambda(y) & = \sum_{x'+y'=1}\chi(tx')\lambda(ty') = \sum_{x'+y'=1}\chi(t)\lambda(t)\chi(x')\lambda(y')  = \sum_{x'+y'=1}\chi\lambda(t)\chi(x')\lambda(y') \\ & = \chi\lambda(t)J(\chi,\lambda).
    \end{align*}
    If we substitute this into our evaluation of $g(\chi)g(\lambda)$, then we have 
    \begin{equation*}
        g(\chi)g(\lambda) = \sum_{t\in\mathbb{F}_{p}}\chi\lambda(t)J(\chi,\lambda)\zeta_{p}^{t} = J(\chi,\lambda)\sum_{t\in\mathbb{F}_{p}}\chi\lambda(t)\zeta_{p}^{t} = J(\chi,\lambda)g(\chi\lambda). 
    \end{equation*}
    Dividing both sides by $g(\chi\lambda)$, we thus have 
    \begin{equation*}
        J(\chi,\lambda) = \frac{g(\chi)g(\lambda)}{g(\chi\lambda)}.
    \end{equation*}
\end{proof}

The following result determines the value of the general Gauss sum, and we will then use it to determine the value of the Jacobi sum.

\begin{lemma}\label{Lemma: value of the general Gauss sum}
    If $\chi\neq \varepsilon$ is a nontrivial character, then $|g(\chi)|^{2}=p$. 
\end{lemma}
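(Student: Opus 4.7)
The plan is to compute the double sum $S=\sum_{a\in\mathbb{F}_{p}}g_{a}(\chi)\overline{g_{a}(\chi)}$ in two different ways and then equate the resulting expressions. This technique is standard for Gauss sum norm computations and exploits the orthogonality relation already proved as Lemma \ref{Lemma: result about zeta from special property prop about Gauss sums about zeta from special property prop about Gauss sums}.

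First I would evaluate $S$ using Lemma \ref{Lemma: two properties of the Gauss sum}. For $a\neq 0$, we have $g_{a}(\chi)=\overline{\chi(a)}g(\chi)$, so $|g_{a}(\chi)|^{2}=|\chi(a)|^{2}|g(\chi)|^{2}=|g(\chi)|^{2}$, since $\chi(a)$ is a root of unity for $a\in\mathbb{F}_{p}^{\times}$. For $a=0$, we have $g_{0}(\chi)=\sum_{t}\chi(t)=0$ by Proposition \ref{Proposition: proof about sum of elements evaluated by a character prop}, because $\chi\neq\eps$. Therefore
\begin{equation*}
S=\sum_{a\in\mathbb{F}_{p}^{\times}}|g_{a}(\chi)|^{2}=(p-1)|g(\chi)|^{2}.
\end{equation*}

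Second, I would expand $S$ directly from the definition. Writing out $g_{a}(\chi)\overline{g_{a}(\chi)}=\sum_{s,t}\chi(s)\overline{\chi(t)}\zeta_{p}^{a(s-t)}$, swapping the order of summation gives
\begin{equation*}
S=\sum_{s,t\in\mathbb{F}_{p}}\chi(s)\overline{\chi(t)}\sum_{a=0}^{p-1}\zeta_{p}^{a(s-t)}.
\end{equation*}
By Corollary \ref{corollary to Lemma 3.7}, the inner sum equals $p\,\delta(s,t)$, so $S$ collapses to $p\sum_{s\in\mathbb{F}_{p}}\chi(s)\overline{\chi(s)}=p\sum_{s\in\mathbb{F}_{p}^{\times}}|\chi(s)|^{2}=p(p-1)$, again using that $\chi(s)$ is a root of unity for $s\neq 0$ (and that $\chi(0)=0$ since $\chi$ is nontrivial).

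Equating the two expressions yields $(p-1)|g(\chi)|^{2}=p(p-1)$, and dividing through gives the result. The only subtle points are justifying that the $a=0$ contribution vanishes (which requires $\chi\neq\eps$, exactly the hypothesis) and being careful with the interchange of summation — but both are immediate, so there is no real obstacle here. The argument is a clean two-way count, and the nontriviality hypothesis enters in precisely the right place.
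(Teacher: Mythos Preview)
Your proposal is correct and follows essentially the same double-counting argument as the paper: evaluate $\sum_{a\in\mathbb{F}_p}g_a(\chi)\overline{g_a(\chi)}$ first via Lemma \ref{Lemma: two properties of the Gauss sum} to obtain $(p-1)|g(\chi)|^2$, then by direct expansion and Corollary \ref{corollary to Lemma 3.7} to obtain $p(p-1)$, and equate. If anything, your treatment is slightly cleaner in that you explicitly dispose of the $a=0$ term via Proposition \ref{Proposition: proof about sum of elements evaluated by a character prop}, whereas the paper handles this step more implicitly.
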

\begin{proof}
    The main idea for the proof is to evaluate the sum 
    \begin{equation*}
        \sum_{a\in\mathbb{F}_{p}}g_{a}(\chi)\overline{g_{a}(\chi)}
    \end{equation*}
    in two different ways and set the evaluations equal to one another. We will first evaluate the argument contained within the sum. Assume that $a\neq 0$. By (1) of Lemma \ref{Lemma: two properties of the Gauss sum}, we can write 
    \begin{equation*}
        \overline{g_{a}(\chi)} = \overline{\chi(a^{-1})g(\chi)} = \chi(a)\overline{g(\chi)}.
    \end{equation*}
    Taking the conjugate, we also have $g_{a}(\chi) = \chi(a^{-1})g(\chi)$. Multiplying and rearranging, we have
    \begin{equation*}
    \chi(a)\overline{g(\chi)}\chi(a^{-1})g(\chi) = \overline{g(\chi)}g(\chi) = |g(\chi)|^{2}.
    \end{equation*}
    Since $\sum_{a\in\mathbb{F}_{p}}$ sums over all elements of $\mathbb{F}_{p}$ except $a=0$, we consider this quantity $p-1$ times. So,
    \begin{equation*}
        \sum_{a\in\mathbb{F}_{p}}g_{a}(\chi)\overline{g_{a}(\chi)} = (p-1)|g(\chi)|^{2}.
    \end{equation*}
    Similarly, considering two parameters $x$ and $y$ and rewriting the argument as a double sum, we have
    \begin{equation*}
        g_{a}(\chi)\overline{g_{a}(\chi)} = \sum_{x\in\mathbb{F}_{p}}\sum_{y\in\mathbb{F}_{p}}\chi(x)\zeta_{p}^{ax}\overline{\chi(y)}\zeta_{p}^{ay} = \sum_{x\in\mathbb{F}_{p}}\sum_{y\in\mathbb{F}_{p}}\chi(x)\overline{\chi(y)}\zeta_{p}^{ax-ay}.
    \end{equation*}
    Summing over all elements of $\mathbb{F}_{p}$ and applying Corollary \ref{corollary to Lemma 3.7}, we have
    \begin{align*}  \sum_{a\in\mathbb{F}_{p}}\sum_{x\in\mathbb{F}_{p}}\sum_{y\in\mathbb{F}_{p}}\chi(x)\overline{\chi(y)}\zeta_{p}^{ax-ay} & = pp^{-1}\sum_{a\in\mathbb{F}_{p}}\sum_{x\in\mathbb{F}_{p}}\sum_{y\in\mathbb{F}_{p}}\chi(x)\overline{\chi(y)}\zeta_{p}^{ax-ay}p \\ & = p\sum_{x\in\mathbb{F}_{p}}\sum_{y\in\mathbb{F}_{p}}\chi(x)\overline{\chi(y)}\zeta_{p}^{ax-ay}\delta(x,y).
    \end{align*}
    If $x\not\equiv y\pmod{p}$ then the double sum will equate to $0$ as $\delta(x,y)$ will be $0$, and by Proposition \ref{Proposition: proof about sum of elements evaluated by a character prop} the sum of all multiplicative characters over $\mathbb{F}_{p}$ is 0. Therefore we consider when $x\equiv y\pmod{p}$. If this is true, then every term except when $x\equiv y\pmod{p}$ will be counted, leaving a total of $p-1$ terms. Since $x\equiv y\pmod{p}$, necessarily $\zeta_{p}^{ax-ay}=1$, so
    \begin{equation*}
        p\sum_{x\in\mathbb{F}_{p}}\sum_{y\in\mathbb{F}_{p}}\chi(x)\overline{\chi(y)}\zeta_{p}^{ax-ay}\delta(x,y) = p(p-1).
    \end{equation*}
    Equating our two evaluations, we have 
    \begin{align*}
        (p-1)|g(\chi)|^{2} & = p(p-1) \\
        |g(\chi)|^{2} & = p.
    \end{align*}
\end{proof}

Given the relation between the Gauss and Jacobi sum in Proposition \ref{Proposition: basic properties of the jacobi sum and relation to Gauss sum} and the value of the Gauss sum in Lemma \ref{Lemma: value of the general Gauss sum}, it is only natural that we ask what the value of the Jacobi sum is. 

\begin{cor}[Corollary to Proposition \ref{Proposition: basic properties of the jacobi sum and relation to Gauss sum}] \label{Corollary: value of the jacobi sum of two characters}
    Let $\chi$ and $\lambda$ be nontrivial multiplicative characters over $\mathbb{F}_{p}$. If their composition $\chi\lambda\neq\eps$, then $|J(\chi,\lambda)|=\sqrt{p}$. 
\end{cor}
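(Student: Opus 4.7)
The plan is to combine the two results immediately preceding this corollary: the identity $J(\chi,\lambda) = g(\chi)g(\lambda)/g(\chi\lambda)$ from Proposition \ref{Proposition: basic properties of the jacobi sum and relation to Gauss sum} and the absolute value formula $|g(\chi)|^{2} = p$ from Lemma \ref{Lemma: value of the general Gauss sum}. The strategy is essentially to take absolute values of both sides of the first identity and then substitute using the second.

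First I would check that all the hypotheses needed are in place. Proposition \ref{Proposition: basic properties of the jacobi sum and relation to Gauss sum} requires $\chi$, $\lambda$, and $\chi\lambda$ all to be nontrivial; these are given in the statement of the corollary. Lemma \ref{Lemma: value of the general Gauss sum} requires the relevant character to be nontrivial, so it applies to each of $g(\chi)$, $g(\lambda)$, and $g(\chi\lambda)$ separately, giving $|g(\chi)| = |g(\lambda)| = |g(\chi\lambda)| = \sqrt{p}$. I would also note briefly that $g(\chi\lambda) \neq 0$ (which follows from Lemma \ref{Lemma: value of the general Gauss sum}) so the quotient is well-defined and the division by it is legitimate.

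Then the computation is a one-liner: take the modulus of both sides of the Jacobi-Gauss identity to obtain
\begin{equation*}
|J(\chi,\lambda)| = \frac{|g(\chi)|\,|g(\lambda)|}{|g(\chi\lambda)|} = \frac{\sqrt{p}\cdot \sqrt{p}}{\sqrt{p}} = \sqrt{p}.
\end{equation*}
There is essentially no obstacle here; the work was entirely absorbed by Proposition \ref{Proposition: basic properties of the jacobi sum and relation to Gauss sum} and Lemma \ref{Lemma: value of the general Gauss sum}, and this corollary is just the arithmetic consequence. The only point to be a little careful about is the role of the nontriviality of $\chi\lambda$, which is what ensures both that the ratio representation of $J(\chi,\lambda)$ is valid and that the denominator has absolute value $\sqrt{p}$ rather than potentially vanishing.
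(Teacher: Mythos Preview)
Your proposal is correct and follows exactly the same approach as the paper: apply Proposition \ref{Proposition: basic properties of the jacobi sum and relation to Gauss sum}, take absolute values, and substitute $\sqrt{p}$ for each Gauss sum via Lemma \ref{Lemma: value of the general Gauss sum}. If anything, your version is slightly more careful in explicitly checking the nontriviality hypotheses and noting that the denominator is nonzero.
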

\begin{proof}
    We apply Proposition \ref{Proposition: basic properties of the jacobi sum and relation to Gauss sum}. Take the absolute value of both sides to obtain 
    \begin{equation*}
        |J(\chi,\lambda)| = \bigg| \frac{g(\chi)g(\lambda)}{g(\chi\lambda)} \bigg| =
        \frac{|g(\chi)||g(\lambda)|}{|g(\chi\lambda)|}.
    \end{equation*}
    By Lemma \ref{Lemma: value of the general Gauss sum}, $g(\chi)=\sqrt{p}$ for any character $\chi$, so this is just $(\sqrt{p})^{2}/\sqrt{p}=\sqrt{p}$. 
\end{proof}

If we recall that the norm of an Eisenstein integer is $a^{2}-ab+b^{2}$, we will see a resemblance in the following result. It turns out that the uniqueness of the representation of the norm of an Eisenstein integer is crucial to proving the uniqueness of the representation of $p$ in Theorem \ref{Theorem: MAIN THEOREM}.

\begin{prop}\label{Proposition: exist integers a and b st a^2-ab+b^2}
    If $p\equiv 1\pmod{3}$, then there exist integers $a$ and $b$ such that $p=a^{2}-ab+b^{2}$. 
\end{prop}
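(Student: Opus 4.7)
The plan is to use the Gauss and Jacobi sum machinery that we have just developed, and in particular the value $|J(\chi,\lambda)| = \sqrt{p}$ from Corollary \ref{Corollary: value of the jacobi sum of two characters}. Since $p \equiv 1 \pmod{3}$, we have $3 \mid p-1$, so by Remark \ref{Remark: character is a cyclic group} (the group of characters is cyclic of order $p-1$) there exists a multiplicative character $\chi$ on $\mathbb{F}_p$ of order exactly $3$. Such a $\chi$ takes values in $\{1, \omega, \omega^{2}\}$, the cube roots of unity, on $\mathbb{F}_p^{\times}$.

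The key observation is then that $J(\chi,\chi)$ is automatically an element of $\mathbb{Z}[\omega]$, because it is a finite integer-linear combination of products $\chi(a)\chi(b)$ with $a+b = 1$, each of which lies in $\{0,1,\omega,\omega^{2}\} \subset \mathbb{Z}[\omega]$. Write $J(\chi,\chi) = a + b\omega$ for some integers $a,b$. I next need to verify that the hypotheses of Corollary \ref{Corollary: value of the jacobi sum of two characters} apply: $\chi \neq \varepsilon$ is immediate, and $\chi^{2} \neq \varepsilon$ follows from the fact that $\chi$ has order $3$ (if $\chi^{2} = \varepsilon$ then $\chi$ would have order dividing $2$, contradicting order $3$). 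Hence Corollary \ref{Corollary: value of the jacobi sum of two characters} yields $|J(\chi,\chi)|^{2} = p$.

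To finish, I compute $|J(\chi,\chi)|^{2}$ directly using the realization $J(\chi,\chi) = a + b\omega \in \mathbb{Z}[\omega]$. Using $\overline{\omega} = \omega^{2}$, $\omega + \omega^{2} = -1$, and $\omega \cdot \omega^{2} = 1$ (see footnote \ref{Footnote: sum of cube roots of unity is 0}), we get
\begin{equation*}
|J(\chi,\chi)|^{2} \;=\; (a+b\omega)(a+b\omega^{2}) \;=\; a^{2} + ab(\omega + \omega^{2}) + b^{2}\omega^{3} \;=\; a^{2} - ab + b^{2},
\end{equation*}
which is precisely the norm $N(a+b\omega)$ mentioned in the preceding discussion of the Eisenstein integers. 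Combining this with $|J(\chi,\chi)|^{2} = p$ from the previous paragraph gives $p = a^{2} - ab + b^{2}$, as required.

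The main obstacle is mostly bookkeeping: confirming that a character of order $3$ exists (which uses the cyclicity of the character group), confirming that $\chi^{2}$ is nontrivial so Corollary \ref{Corollary: value of the jacobi sum of two characters} applies, and confirming that $J(\chi,\chi)$ really lands in $\mathbb{Z}[\omega]$ rather than merely in $\mathbb{Q}(\omega)$. Once these are in place, the norm computation in $\mathbb{Z}[\omega]$ does all the remaining work, and no further arithmetic input is needed beyond the basic identities among cube roots of unity.
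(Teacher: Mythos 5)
Your proposal is correct and follows essentially the same route as the paper: take a cubic character $\chi$ on $\mathbb{F}_p$ (existing because $3\mid p-1$), note $J(\chi,\chi)=a+b\omega\in\ZZ[\omega]$, and combine $|J(\chi,\chi)|=\sqrt{p}$ from Corollary \ref{Corollary: value of the jacobi sum of two characters} with the norm identity $(a+b\omega)(a+b\omega^{2})=a^{2}-ab+b^{2}$. If anything, your write-up is more careful than the paper's, since you explicitly check that $\chi^{2}\neq\eps$ before invoking the corollary and justify why $J(\chi,\chi)$ lands in $\ZZ[\omega]$.
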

\begin{proof}
    Since multiplicative characters form a cyclic group of order $p$ by our discussion in Remark \ref{Remark: character is a cyclic group}, there must be some generator element, say $\chi(a)$, that satisfies $\chi(a)^{p-1}=1$. Since $p\equiv 1\pmod{3}$, the order of the cyclic group we are dealing with must be a multiple of $3$. As a consequence of Lagrange's Theorem, there must exist some character of order $3$. 

    \indent Since there exists some character of order $3$, its values must be roots of the polynomial equation $x^{3}=1$, i.e., it must be a cube root of unity, taking on one of the values $1,\omega,$ and $\omega^{2}$. Therefore 
    \begin{equation*}
        J(\chi,\chi) = \sum_{u+v=1}\chi(u)\chi(v) = \sum_{u+v=1}\chi(uv)
    \end{equation*}
    must be an Eisenstein integer, and may be expressed in the form $J(\chi,\chi) = a+b\omega$, where $a,b\in\ZZ$. Recall that the norm of any Eisenstein integer in $\ZZ[\omega]$ is $N(a+b\omega) = a^{2}-ab+b^{2}$. Taking the absolute value of both sides and recalling Corollary \ref{Corollary: value of the jacobi sum of two characters}, we thus have
    \begin{align*}
        |a+b\omega| & = |J(\chi,\chi)| = \sqrt{p} \\
        N(a+b\omega) & = N(\sqrt{p}) \\
        a^{2}-ab+b^{2} & = (\sqrt{p})^{2} = p.
    \end{align*}
\end{proof}

\begin{ex}
    Suppose that $p=61\equiv 1\pmod{3}$. Then a possible pair for $a$ and $b$ is $(9,5)$, because $(9)^{2}-(9)(5)+(5)^{2} = 81-45+25 = 61$. 
\end{ex}

\begin{rem} \label{Remark: Implication of J(x,x)=a+bw}
    Regarding the Eisenstein integer $J(\chi,\chi)=a+b\omega$, an important result states that when $p\equiv 1\pmod{3}$, we have $a\equiv -1\pmod{3}$ and $b\equiv 0\pmod{3}$. The section on Jacobi sums in \cite{ireland1990classical} explains this result in necessary depth, as it is certainly not trivial and depends on some results about Jacobi sums that lie beyond the scope of this paper. We will use this fact in the proof of Theorem \ref{Theorem: MAIN THEOREM}.
\end{rem}

We are now prepared to prove the final result of this section. We will not present the proof for uniqueness as it becomes rather technical, but we give the more approachable proof of existence in favor of its implications. 

\begin{thm}\label{Theorem: exist unique integers st A^2+27B^2=4p}
    If $p\equiv 1\pmod{3}$, then there exist unique integers $A$ and $B$ that are determined up to sign such that $4p=A^{2}+27B^{2}$.
\end{thm}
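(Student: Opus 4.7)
The plan is to leverage Proposition~\ref{Proposition: exist integers a and b st a^2-ab+b^2}, which supplies integers $a, b$ with $p = a^2 - ab + b^2$, and then convert this representation into the required shape $4p = A^2 + 27B^2$ by means of the algebraic identity
\[
4(a^2 - ab + b^2) = (2a - b)^2 + 3b^2.
\]
Multiplying the relation $p = a^2 - ab + b^2$ by $4$ thus yields $4p = (2a - b)^2 + 3b^2$, which is almost of the desired form. The only obstruction is that I need $3b^2 = 27B^2$, i.e.\ $3 \mid b$, and this divisibility need not hold for the arbitrary pair $(a, b)$ returned by the proposition.

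To remove this obstruction I would exploit the non-uniqueness of $(a, b)$. Writing $\pi = a + b\omega$, the six unit multiples $\pm\pi,\, \pm\omega\pi,\, \pm\omega^{2}\pi$ all have norm $p$, and a short calculation using $\omega^{2} = -1 - \omega$ (see footnote~\ref{Footnote: sum of cube roots of unity is 0}) shows that their $b$-coordinates run through $\pm a$, $\pm b$, and $\pm(a - b)$. Hence it suffices to prove that at least one of $a$, $b$, $a - b$ is divisible by $3$. If none were, then $a \not\equiv 0$, $b \not\equiv 0$, and $a \not\equiv b \pmod{3}$, forcing $\{a \bmod 3,\, b \bmod 3\} = \{1, 2\}$ and hence $a^{2} - ab + b^{2} \equiv 1 - 2 + 1 \equiv 0 \pmod{3}$, contradicting $p \equiv 1 \pmod{3}$. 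So I may replace $(a, b)$ by the associate whose $b$-coordinate is divisible by $3$.

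Setting $B := b/3$ and $A := 2a - b = 2a - 3B$, the displayed identity becomes $4p = A^{2} + 27B^{2}$, which finishes the existence half of the theorem. For uniqueness up to sign, any representation $4p = A^{2} + 27B^{2}$ produces an element of $\ZZ[\omega]$ of norm $p$ (indeed $\tfrac{1}{2}(A + 3B) + 3B\omega$, with $A \equiv B \pmod 2$ forced by reducing the equation modulo $4$, has norm $(A^{2} + 27B^{2})/4 = p$); the unique factorisation in $\ZZ[\omega]$ pins this element down to an associate of $\pi$ or $\bar\pi$, and the divisibility $3 \mid b$ then determines the associate up to an overall sign. As the remark immediately preceding the theorem indicates, the full case check is set aside in favour of the existence proof, which is the part actually needed for the cubic character of $2$.

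The main obstacle is the divisibility step: arguing that $3 \mid b$ can always be engineered. Everything else is formal manipulation. This is the single step in which the hypothesis $p \equiv 1 \pmod{3}$ is actually consumed, and it is where the arithmetic of the unit group of $\ZZ[\omega]$ interacts non-trivially with the elementary congruence $a^{2} - ab + b^{2} \equiv p \pmod{3}$.
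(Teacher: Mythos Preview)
Your proposal is correct and follows essentially the same route as the paper: both start from $p=a^{2}-ab+b^{2}$, use the identity $4p=(2a-b)^{2}+3b^{2}$, and handle the divisibility obstruction by observing that at least one of $a$, $b$, $a-b$ is divisible by $3$ (else $\{a,b\}\equiv\{1,2\}\pmod{3}$ forces $p\equiv 0\pmod 3$). Your phrasing in terms of unit multiples of $\pi$ in $\ZZ[\omega]$ is a slightly more conceptual packaging of what the paper writes out as three parallel algebraic identities for $4p$, and your sketch of uniqueness via unique factorisation goes a bit further than the paper, which explicitly sets that part aside.
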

\begin{proof}
    We want to manipulate $a^{2}-ab+b^{2}$ to be in a unique form. Proposition \ref{Proposition: exist integers a and b st a^2-ab+b^2} guarantees the existence of such $a$ and $b$. Notice that even if $a,b>0$, the representation is not unique for $p\equiv 1\pmod{3}$ because
    \begin{align*}
        a^{2}-ab+b^{2} & = (b-a)^{2}-(b-a)b+b^{2} \\ & = a^{2}-a(a-b)+(a-b)^{2},
    \end{align*}
    both of which are in the form $x^{2}-xy+y^{2}$ for $x,y\in\ZZ$. We will manipulate this expression such that it is unique. Recall that $p=a^{2}-ab+b^{2}$. Then 
    \begin{align}
        4p & = 4a^{2}-4ab+4b^{2} = (2a-b)^{2}+3b^{2} = (2b-a)^{2}+3a^{2} = (a+b)^{2}+3(a-b)^{2}. \label{Equation: (a+b)^2+3(a-b)^2=4p}
    \end{align}
    For this to be in the form that we want, we require $a-b$ to be a multiple of $3$, or $3|a-b$. Otherwise, either $3|a$ or $3|b$, but not both. Suppose that $3\nmid a$ and $3\nmid b$. We then consider two cases: (a) when $a\equiv 1\pmod{3}$ and $b\equiv 2\pmod{3}$ and (b) when $a\equiv 2\pmod{3}$ and $b\equiv 1\pmod{3}$. Let $k,l\in\ZZ$. For case (a), we have
    \begin{align*}
        (3k+1)^{2}-(3k+1)(3l+2)+(3l+2)^{2} & = 9k^{2}+6k-9kl-6k-3l+9l^{2}+12l+3 \\ & \equiv 0\pmod{3}.
    \end{align*}
    Case (b) follows similarly. Both cases show that $a^{2}-ab+b^{2}=p\equiv 0\pmod{3}$, which is impossible. Therefore $3|a-b$. Let $a-b=3B$ and $a+b=A$. Substituting this into Equation \eqref{Equation: (a+b)^2+3(a-b)^2=4p}, we obtain
    \begin{equation*}
        4p=A^{2}+3(3B)^{2} = A^{2}+27B^{2},
    \end{equation*}
    which is in the form that we wanted.

    As stated, the proof for uniqueness is rather long and  technical so we omit it. 
\end{proof}

\begin{ex}
    Suppose that $p=61$, so $61\equiv 1\pmod{3}$. Theorem \ref{Theorem: exist unique integers st A^2+27B^2=4p} asserts that there exist integers $A$ and $B$ such that $4(61)=244=A^{2}+27B^{2}$. By trial and error, we can see that when $A=1$ and when $B=3$, we have $244=1^{2}+27(3)^{2}=1+243=244$. This representation is unique up to sign, as $A=-1$ and $B=-3$ also satisfy the equality. 
\end{ex}

Theorem \ref{Theorem: exist unique integers st A^2+27B^2=4p} is also central to proving Euler's other conjectures for the cubic residuacity of small primes $p\equiv 1\pmod{3}$ in the form $a^{2}+3b^{2}$. As known by Gauss, Lagrange (who contributed to numerous results concerning quadratic residues using quadratic forms), and others, this result is rooted in the study of binary quadratic forms, an area that initially developed from Fermat's theorem on the sum of two squares and was studied rigorously by Gauss in \textit{Disquisitiones}\footnote{More of the richness regarding how binary quadratic forms paint a more complete picture of the development of both cubic and biquadratic residues can be found in section 4 of \cite{cox2013primes}.}.

\section*{The Law of Cubic Reciprocity}
While we cannot contain the full breadth of cubic reciprocity in this paper, it is an integral part of what we will use to evaluate the cubic character of 2, and only needs some more algebra. Cubic reciprocity occurs over the residue class ring $\ZZ[\omega]/\pi\ZZ[\omega]$, where $\pi$ is some prime element of $\ZZ[\omega]$. It turns out that this residue class ring is also a finite field with exactly $N\pi$ elements, and thus it retains properties that we need; for instance its multiplicative group $(\ZZ[\omega]/\pi\ZZ[\omega])^{\times}$ contains $N\pi-1$ elements, and by our discussion in Theorem \ref{Theorem: the multiplicative group of a finite field is cyclic} from earlier, it is cyclic. In summary,

\begin{thm} \label{Theorem: residue class ring contains Npi elements}
    The finite field $\ZZ[\omega]/\pi \ZZ[\omega]$ contains $N\pi$ elements. 
\end{thm}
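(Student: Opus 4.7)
\textit{Plan.} I would split the argument on the type of prime $\pi$ using the classification of primes in $\ZZ[\omega]$. Since $\ZZ[\omega]$ is a UFD (indeed Euclidean), every prime $\pi$ is, up to a unit, either (i) a rational prime $q$ with $q\equiv 2\pmod{3}$ and $N\pi=q^{2}$, or (ii) a prime of prime norm $N\pi=p$, where $p=3$ (arising from $\pi=1-\omega$) or $p\equiv 1\pmod{3}$ (arising from a factor of a split rational prime). That the quotient is a field at all is automatic: $\ZZ[\omega]$ is a PID and $\pi$ is prime, so $\pi\ZZ[\omega]$ is maximal. It therefore suffices to count representatives in each case.

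In case (i), every element of $\ZZ[\omega]$ has the form $a+b\omega$ with $a,b\in\ZZ$, and performing ordinary division of $a$ and $b$ by $q$ yields a representative $r+s\omega$ with $0\le r,s<q$. To show the resulting $q^{2}$ classes are distinct, suppose $(r-r')+(s-s')\omega\in q\ZZ[\omega]$. Writing the right-hand side in the $\ZZ$-basis $\{1,\omega\}$ and comparing coefficients forces $q\mid r-r'$ and $q\mid s-s'$, and the size bounds $|r-r'|,|s-s'|<q$ give $r=r'$ and $s=s'$. Hence $|\ZZ[\omega]/q\ZZ[\omega]|=q^{2}=N\pi$.

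In case (ii), my plan is to show that the inclusion $\ZZ\hookrightarrow\ZZ[\omega]$ descends to an isomorphism $\FF_{p}\xrightarrow{\sim}\ZZ[\omega]/\pi\ZZ[\omega]$. Since $p=\pi\overline{\pi}\in\pi\ZZ[\omega]$, the map factors through $\ZZ/p\ZZ$, and injectivity is automatic because $\FF_{p}$ is a field and the image of $1$ is nonzero in the quotient. For surjectivity, write $\pi=c+d\omega$ with $c^{2}-cd+d^{2}=p$; the relation $d\omega\equiv -c\pmod{\pi}$, combined with the invertibility of $d$ modulo $p$, rewrites any $a+b\omega$ as a rational integer modulo $p$. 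This yields $|\ZZ[\omega]/\pi\ZZ[\omega]|=p=N\pi$.

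\textit{Main obstacle.} The only delicate step is verifying that $d$ is invertible modulo $p$. If instead $p\mid d$, then $c^{2}=p+cd-d^{2}$ gives $p\mid c^{2}$ and hence $p\mid c$, so $\pi=p\bigl((c/p)+(d/p)\omega\bigr)$; taking norms yields $p^{2}\mid N\pi=p$, which is impossible. With this small point settled, the rest of the argument is a straightforward manipulation in the $\ZZ$-basis $\{1,\omega\}$.
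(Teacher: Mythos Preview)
Your argument is correct. Note, however, that the paper does not actually supply a proof of this theorem: it is stated as a background fact and immediately used, with no \texttt{proof} environment following the statement. So there is no ``paper's own proof'' to compare against.

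On its own merits your write-up is sound. The case split according to the classification of primes in $\ZZ[\omega]$ is the standard route, and both halves are handled cleanly. In case~(i) the coefficient-comparison in the $\ZZ$-basis $\{1,\omega\}$ is exactly what is needed to show the $q^{2}$ representatives are distinct. In case~(ii) your surjectivity step is the efficient one: the congruence $d\omega\equiv -c\pmod{\pi}$ together with invertibility of $d$ modulo $p$ collapses every class to a rational integer, and your ``main obstacle'' paragraph disposes of the only genuine subtlety (that $p\nmid d$) by a clean norm argument. The injectivity via ``nonzero ring map out of a field'' is also fine; equivalently one could observe that $\ZZ\cap\pi\ZZ[\omega]$ is a proper ideal of $\ZZ$ containing $p$, hence equals $p\ZZ$.

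One small remark: you invoke the classification of primes in $\ZZ[\omega]$ as input. The paper only proves the easy direction (Lemma on $N\pi$ prime $\Rightarrow$ $\pi$ prime) and otherwise takes the structure of $\ZZ[\omega]$ for granted, so this is consistent with the level of the exposition; but if you wanted a fully self-contained proof you would need a sentence explaining why an inert rational prime $q$ must satisfy $q\equiv 2\pmod{3}$ (equivalently, why $q\equiv 1\pmod{3}$ forces $q$ to split). That is not a gap in your argument so much as a dependency worth flagging.
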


This finite field is also unique in that it has a notion of congruence modulo a complex prime $\pi$, and the division algorithm applies. 

\indent One important classifying result for primes in $\ZZ[\omega]$ is that if the norm of some Eisenstein integer is a rational prime, then the Eisenstein integer must also be prime. To show this, we use contradiction. Suppose that $\pi$ is not prime in $\ZZ[\omega]$. Since it's not prime, without loss of generality, we can express it as a product of two non-unitary primes, say $\rho$ and $\gamma$, so $\pi=\rho\gamma$. If we take the norm of both sides, we have $N\pi=p=N\rho\gamma=N\rho N\gamma$. However, since $\rho$ and $\gamma$ are non-unitary, each of their norms must be greater than $1$. So, $N\rho N\gamma$ must also be greater than $1$, which is impossible since $p$ is a rational prime. Therefore $\pi$ is prime. 

\indent For example, let $\pi=3+\omega$, so $N(3+\omega)=3^{2}-3(1)+1=7$, which is prime. Therefore $3+\omega$ is prime and has no representation in terms of other primes. In summary, 

\begin{lemma}\label{Lemma: Npi=p implies pi is prime in Z[omega]}
    If $\pi\in\ZZ[\omega]$ has the property that its norm $N\pi = p$ a rational prime, then $\pi$ is prime in $\ZZ[\omega]$. 
\end{lemma}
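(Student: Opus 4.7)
The plan is to argue by contradiction using the multiplicativity of the norm on $\ZZ[\omega]$, exactly along the lines sketched in the paragraph preceding the lemma statement. First I would recall that for $\alpha = a+b\omega \in \ZZ[\omega]$, the norm is $N\alpha = \alpha\overline{\alpha} = a^{2}-ab+b^{2}$, and that this norm is multiplicative: $N(\alpha\beta) = \alpha\beta\overline{\alpha}\overline{\beta} = N\alpha \cdot N\beta$. This is the computational backbone of the whole argument.

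Next I would suppose for contradiction that $\pi$ factors nontrivially in $\ZZ[\omega]$, say $\pi = \rho\gamma$ where neither $\rho$ nor $\gamma$ is a unit. Applying the norm gives
\begin{equation*}
    p = N\pi = N(\rho\gamma) = N\rho \cdot N\gamma,
\end{equation*}
an equality of positive rational integers.

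The key step is to rule out $N\rho = 1$ or $N\gamma = 1$. Here I would invoke the characterization of units in $\ZZ[\omega]$ discussed earlier in the paper: an element $\alpha \in \ZZ[\omega]$ is a unit if and only if $N\alpha = 1$. Indeed, if $\alpha\beta = 1$ then $N\alpha \cdot N\beta = 1$ in $\ZZ_{\geq 1}$, forcing $N\alpha = 1$; conversely, if $N\alpha = 1$ then $\alpha \overline{\alpha} = 1$, so $\overline{\alpha}$ is a multiplicative inverse. Since by assumption $\rho$ and $\gamma$ are non-units, both $N\rho > 1$ and $N\gamma > 1$, so $p = N\rho \cdot N\gamma$ exhibits $p$ as a product of two rational integers each strictly greater than $1$, contradicting the primality of $p$ in $\ZZ$.

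The main (minor) obstacle is just making sure the unit characterization is in place, since the whole argument collapses into triviality once one knows that non-units in $\ZZ[\omega]$ have norm $> 1$. No further input from the theory of Gauss or Jacobi sums is required; this is purely a norm-multiplicativity argument, mirroring the analogous fact for the Gaussian integers $\ZZ[i]$.
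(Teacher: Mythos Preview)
Your argument is correct and is essentially identical to the paper's own proof, which is given in the paragraph immediately preceding the lemma: assume a nontrivial factorization $\pi = \rho\gamma$, apply multiplicativity of the norm to get $p = N\rho\cdot N\gamma$, and use that non-units have norm $>1$ to contradict the rational primality of $p$. Your only addition is spelling out the equivalence ``unit $\Longleftrightarrow$ norm $1$'' explicitly, which the paper had already noted in its earlier discussion of units in $\ZZ[\omega]$.
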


An interesting implication of this result is that, by the definition of the norm and Proposition \ref{Proposition: exist integers a and b st a^2-ab+b^2}, the norm of each prime Eisenstein integer is uniquely expressible in terms of its components as a rational prime, or for some rational prime $p$ we have $N\pi = N(a+b\omega) = p = a^{2}-ab+b^{2}$. We will use this representation in our proof of Theorem \ref{Theorem: MAIN THEOREM}. 

\indent We give an example of the residue class ring $\ZZ[\omega]/\pi\ZZ[\omega]$. 

\begin{ex}
    Consider the prime $3+\omega$. We already showed that $3+\omega$ is prime. We are looking at the residue class ring $\ZZ[\omega]/(3+\omega)\ZZ[\omega]$. This ring contains $N(3+\omega)=7$ elements. It is possible to compute all 7 elements of this ring using brute force (i.e., considering all pairs $(a,b)$ where $a,b\in\{ 0,\ldots,2\}$), but we will not do this. Note also that in this definition of the residue class ring, the prime $3+\omega$ acts as a modulus, so only one coset representative modulo $3+\omega$ is in the ring. 
\end{ex}

If we recall our discussion in Footnote \ref{Footnote: mult group of int mod p is cyclic}, we can naturally suspect that there exists an analogous form to Fermat's Little Theorem over $\ZZ[\omega]/\pi\ZZ[\omega]$. Indeed, since $\ZZ[\omega]/\pi\ZZ[\omega]$ is a finite field, its multiplicative group is cyclic. So, for some generator element $\alpha\in\ZZ[\omega]/\pi\ZZ[\omega]$, all elements must satisfy 
\begin{equation}
    \alpha^{N\pi-1}\equiv 1\pmod{\pi} \label{Equation: Analogue to FLT in Z[w]/piZ[w]}
\end{equation}
for $\pi\nmid\alpha$. This can be referred to as the analogue to Fermat's Little Theorem over $\ZZ[\omega]/\pi\ZZ[\omega]$, and it is the fundamental equation for defining the cubic residue character. 

\indent Just as with quadratic characters, we need a corresponding cubic character. Note that since the quadratic character outputs solutions to the polynomial equation $x^{2}=1$, the cubic character outputs solutions to the polynomial equation $x^{3}=1$; namely, the cube roots of unity, $1,\omega$, and $\omega^{2}$. 
\begin{defn}[Cubic residue character]
    Let $N\pi\neq 3$. We say that the \textit{cubic residue character of $\alpha$ modulo $\pi$} is defined as
    \begin{enumerate}
        \item $(\alpha/\pi)_{3}=0$ if $\pi|\alpha$, 
        \item $\alpha^{\frac{N\pi-1}{3}}\equiv(\alpha/\pi)_{3} \pmod{\pi}$ where 
        \begin{equation*}
            (\alpha/\pi)_{3} = 
            \left\{
        \begin{array}{ll}
            1 & \text{if $\alpha$ is a cubic residue},\\
            \omega \ \text{or} \ \omega^{2} & \text{otherwise}.
        \end{array}
    \right.
        \end{equation*}
    \end{enumerate}
\end{defn}

With this defined, we can momentarily sidestep and provide an example computation of a Jacobi sum.

\begin{ex}\label{Example: computation of jacobi sum}
    Suppose that we are working with the Legendre symbol $(x/p)$ and the cubic residue character $(y/p)_{3}$ over $\mathbb{F}_{5}$. We compute the Legendre symbols first, so evaluate in the set $\ZZ/5\ZZ = \{ 0,1,2,3,4 \}$. By definition, $(0/5) = 0$. Since $1,4$ are quadratic residues, $(1/5) = (4/5) = 1$, and since $2,3$ are quadratic nonresidues, $(2/5) = (3/5) = -1$. For the cubic residue character, we evaluate in the cyclic group $\mathbb{F}_{5}^{\times} = \{1,2,3,4\}$. Notice that $2$ is the generator element of $\mathbb{F}_{5}^{\times}$, so we let $(2/5)_{3} = \omega$, and compute the remaining cubic characters accordingly. Then $(1/5)_{3} = 1, (3/5)_{3} = \omega^{2}$, and $(4/5)_{3} = \omega^{3}=1$. 
    
    Rewriting the condition that $a+b=1$ in the sum index, we compute the Jacobi sum as
    \begin{align*}
        J\left(\left( \frac{x}{5} \right),\left( \frac{y}{5} \right)_{3}\right) = \sum_{a=0}^{4}\left( \frac{a}{5} \right)\left( \frac{1-a}{5} \right)_{3} = (0)(1) + (1)(0) + (-1)(1) + (-1)(\omega^{2}) + (1)(\omega) = 2\omega.
    \end{align*}
\end{ex}

The cubic character behaves nearly exactly like the Legendre symbol. Most importantly, there is multiplicativity, so for $\alpha,\beta\in\ZZ[\omega]/\pi\ZZ[\omega]$, we have 
\begin{equation}
    (\alpha\beta/\pi)_{3} = (\alpha/\pi)_{3}(\beta/\pi)_{3}. \label{Equation: multiplicativity of cubic character}
\end{equation}

A proof follows by using Equation \eqref{Equation: Analogue to FLT in Z[w]/piZ[w]} and the definition of the cubic character. The most important definition over $\ZZ[\omega]/\pi\ZZ[\omega]$ is that of primary numbers. 

\begin{defn}[Primary number]
    Let $\pi\in\ZZ[\omega]$ be prime. We say that $\pi$ is \textit{primary} if $\pi\equiv \pm 1\pmod{3}$. 
\end{defn}

\begin{ex}
    Every element of $\ZZ[\omega]$ is representable in the form $a+b\omega$ for $a,b\in\ZZ$, so an element would be primary when its components satisfy $a\equiv 2\pmod{3}$ and $b\equiv 0\pmod{3}$. For instance, $8+6\omega$ is primary because $8\equiv 2\pmod{3}$ and $6\equiv 0\pmod{3}$, and $7+3\omega$ is primary because $7\equiv 2\pmod{3}$ and $3\equiv 0\pmod{3}$. However, $6+9\omega$ is not primary because both $6\equiv 9\equiv 0\pmod{3}$. 
\end{ex}

Eisenstein defined primary numbers to give a stronger notion of primality over $\ZZ[\omega]$. One can verify that the product of primary numbers remains primary, i.e., primary numbers are multiplicatively closed, and that the complex conjugate of a primary number remains primary. In fact, $\ZZ[\omega]$ is a UFD explicitly because there exists a unique primary factorization constructed from primary numbers. With these definitions, we would then be able to state the law of cubic reciprocity as in Theorem \ref{The Law of Cubic Reciprocity}. 

\indent An important discussion concerning cubic reciprocity and its relationship with biquadratic reciprocity and the formulation of the cubic and biquadratic character of 2 is now due. While cubic reciprocity wasn't formally published until 1844 by Eisenstein, both the cubic and biquadratic law were proven nearly a decade earlier by Jacobi in a sequence of lectures given in Königsberg from 1836-1837 after he had stated his theorems on cubic residuacity in 1827 without proof (see \cite[pg. 200]{lemmermeyer2000reciprocity}). Jacobi's lectures on number theory include complete proofs, but whether they were his or Eisenstein's still remains up to debate \cite{jacobi2007vorlesungen}. This nearly one decade gap between the 1836-1837 lectures and Eisenstein's 1844 formally published proof seems strange, but it is likely because the biquadratic law might have been easier than the cubic law: in a letter from Jacobi to Legendre in 1827, Jacobi wrote,

\begin{quote}
    Mr. Gauss presented to the Society of Göttingen two years ago a first memoir on the theory of biquadratic residues, which is much easier than the theory of cubic residues \cite{Jacobi1827-yn} (translated \cite[pg. 224]{lemmermeyer2000reciprocity}).
\end{quote}

Evidently, the biquadratic character of 2 was proven by Gauss in his first monograph of biquadratic reciprocity, which was published in 1828, far before his derivation of the cubic character of 2 \cite{gauss1876biquadraticmono}. Furthermore, Dirichlet (and Dedekind) derived the biquadratic character of 2 in an elementary fashion in their lectures on number theory \cite{dirichlet1863vorlesungen}.

\indent A computation applying cubic reciprocity is helpful to consider, though it should be noted that cubic characters are far more tedious to compute than Legendre symbols. 

\begin{ex}
    \footnote{This computation features a number of identities that we will not state here. For those who wish to follow carefully, statements and proofs of the identities can be found in chapter 9 section 4 of \cite{ireland1990classical} and section 3 of \cite{relyea2024finitefieldshigherreciprocity}.}Suppose we wish to determine the solvability of the congruence $x^{3}\equiv (3-\omega)\pmod{5}$. Note that this is valid because $N(3-\omega) = 13$ is prime. This amounts to evaluating the symbol $((3-\omega)/5)_{3}$. The modulus is already primary, but we need to make the argument primary. Recall that the units of $\ZZ[\omega]$ are $\pm 1,\pm\omega,\pm\omega^{2}$. We want to find some unit $u$ such that $u(3-\omega)\equiv 2\pmod{3}$. After some trial and error, we use Footnote \ref{Footnote: sum of cube roots of unity is 0} to find that $\omega^{2}(3-\omega) = 3\omega^{2}-1 = -4-3\omega \equiv 2\pmod{3}$. Therefore $((3-\omega)/5)_{3} = (\omega^{2}/5)^{-1}_{3}((-4-3\omega)/5)_{3}$. By applying Theorem \ref{The Law of Cubic Reciprocity} to the second character, this becomes $(\omega^{2}/5)^{-1}_{3}(5/(-4-3\omega))_{3}$. We want to reduce $5$ modulo $-4-3\omega$. Using rules of complex numbers, we see that $5/(-4-3\omega) = (-20-15\omega^{2})/13$. This fraction is approximately greater than $-1+\omega=-2-\omega^{2}$ using Footnote \ref{Footnote: sum of cube roots of unity is 0}, so we reduce $5$ by this multiple of $-4-\omega$. We obtain $5-(-4-3\omega)(-2-\omega^{2}) = 2\omega^{2}$. Evaluating the final symbol, we have
    \begin{align*}
        \left( \frac{3-\omega}{5}  \right)_{3} & = \left( \frac{\omega^{2}}{5}  \right)_{3}^{-1}\left( \frac{2\omega^{2}}{-4-3\omega} \right)_{3} = \left( \frac{\omega^{2}}{5} \right)_{3}^{-1}\left( \frac{\omega^{2}}{-4-3\omega}   \right)_{3}\left( \frac{2}{-4-3\omega} \right)_{3} 
        \\
        & = \left( \frac{\omega^{2}}{5} \right)_{3}^{-1}\left( \frac{\omega}{-4-3\omega}   \right)_{3}^{2}\left( \frac{-4-3\omega}{2} \right)_{3} = \left( \frac{\omega}{5} \right)_{3}^{-2}\left( \frac{\omega}{-4-3\omega} \right)_{3}^{2}\left( \frac{\omega}{2} \right)_{3} 
        \\
        & = (\omega^{\frac{N(5)-1}{3}})^{-2}(\omega^{\frac{N(-4-3\omega)-1}{3}})(\omega^{\frac{N(2)-1}{3}}) = (\omega^{2})^{-2}(\omega)(\omega)^{2} = \omega^{-1} = \omega^{2}.
    \end{align*}
    Therefore there is no solution to the congruence $x^{3}\equiv (3-\omega)\pmod{5}$. 
\end{ex}

\section*{The Cubic Character of 2}
Whereas the quadratic character of 2 determines the solvability of $x^{2}\equiv 2\pmod{p}$ for an odd prime $p$, the cubic character of 2 determines the solvability of $x^{3}\equiv 2\pmod{\pi}$, where $\pi\in\ZZ[\omega]$ is a prime. Given that cubic reciprocity occurs over both Eisenstein primes and rational primes, the cubic character of 2 can be considered for both prime and non-prime moduli. We will first take a look at a neat result for rational primes. 

\subsection*{A Neat Rational Case} 
In our first case, we suppose that the modulus $\pi$ is some rational prime $q$. Furthermore, since we deduced that $\pi$ must be primary, we let $q\equiv 2\pmod{3}$. This gives us the following generalization that, while not particularly helpful in considering the general case, is immensely powerful when considering the cubic character of only rational integers. 

\begin{prop}\label{Proposition about how every int is a cub res if q=2 mod 3}
    If $q\equiv 2\pmod{3}$ is a rational prime, then every integer is a cubic residue modulo $q$. 
\end{prop}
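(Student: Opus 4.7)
The plan is to show that when $q \equiv 2 \pmod{3}$, the cubing map $\phi \colon \FF_q \to \FF_q$ defined by $\phi(x) = x^3$ is a bijection; surjectivity then says precisely that every residue class modulo $q$ is a cube. The element $0$ is trivially $0^3$, so the real content is to show bijectivity on the multiplicative group $\FF_q^\times = (\ZZ/q\ZZ)^\times$.

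The key numerical observation I would start from is that $q \equiv 2 \pmod 3$ forces $q - 1 \equiv 1 \pmod 3$, so $\gcd(3, q-1) = 1$. By Theorem \ref{Theorem: the multiplicative group of a finite field is cyclic} (or the elementary footnote that $\FF_q^\times$ is cyclic), $\FF_q^\times$ is a cyclic group of order $q-1$. In any cyclic group, raising to a power coprime to the group order is a group automorphism, hence a bijection. This already finishes the argument, but since the paper is elementary in flavor, I would spell this out via Bezout rather than appeal to the general fact.

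Concretely, since $\gcd(3, q-1) = 1$, there exist integers $u, v \in \ZZ$ with
\begin{equation*}
    3u + (q-1)v = 1.
\end{equation*}
Given any $a \in \FF_q^\times$, Fermat's Little Theorem gives $a^{q-1} \equiv 1 \pmod q$, and therefore
\begin{equation*}
    a \equiv a^{3u + (q-1)v} \equiv (a^u)^3 \cdot (a^{q-1})^v \equiv (a^u)^3 \pmod q.
\end{equation*}
Thus $x \equiv a^u \pmod q$ solves $x^3 \equiv a \pmod q$, proving every nonzero residue is a cubic residue. Including $0 = 0^3$, every integer is a cubic residue modulo $q$.

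I do not anticipate a real obstacle here: the argument hinges entirely on the arithmetic fact $\gcd(3, q-1) = 1$ coming from $q \equiv 2 \pmod 3$, and on the cyclicity of $\FF_q^\times$, both of which are already in hand. The only judgment call is stylistic --- whether to invoke the general ``coprime powers are automorphisms of cyclic groups'' principle or to write out the Bezout-plus-Fermat calculation explicitly; I would prefer the latter since it matches the elementary tone set earlier in the paper.
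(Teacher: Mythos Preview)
Your proof is correct, and the underlying idea---that the cubing map on $\FF_q^\times$ is a bijection because $\gcd(3,q-1)=1$---is the same one the paper uses. The execution differs, though: the paper argues injectivity first, computing $\text{Ker}(\phi)$ via cyclicity of $(\ZZ/q\ZZ)^\times$ and then invoking the First Isomorphism Theorem to get $|\text{Im}(\phi)|=|(\ZZ/q\ZZ)^\times|$ and hence surjectivity by counting. Your Bezout-plus-Fermat argument instead proves surjectivity directly and constructively, exhibiting $a^u$ as an explicit cube root of $a$. Your route is a bit leaner---it never actually needs cyclicity or the isomorphism theorem, only Fermat's Little Theorem---and it has the bonus of producing a formula for the cube root; the paper's route emphasizes the group-theoretic structure more explicitly, which ties in with the surrounding discussion of cyclic multiplicative groups. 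Either is perfectly appropriate here.
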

\begin{proof}
    We begin by assuming that $q\equiv 2\pmod{3}$ is a rational prime. Since $q$ is rational, we can work in the integers modulo $q$, namely $\mathbb{Z}/q\mathbb{Z}$. We define a group homomorphism $\phi:(\mathbb{Z}/q\mathbb{Z})^{\times} \rightarrow (\mathbb{Z}/q\mathbb{Z})^{\times}$ with the mapping $\phi(k)=k^{3}$ for some $k\in (\mathbb{Z}/q\mathbb{Z})^{\times}$. By the First Isomorphism Theorem, 
    \begin{equation*}
        (\mathbb{Z}/q\mathbb{Z})^{\times}/\text{Ker}(\phi)\approx \text{Im}(\phi).
    \end{equation*}
    We now determine the kernel of $\phi$. For an element $k$ to be contained within $\text{Ker}(\phi)$, the map induced by $\phi$ must yield $1$, the identity of $(\mathbb{Z}/q\mathbb{Z})^{\times}$. In other words, $k^{3}=1$ iff $k\in \text{Ker}(\phi)$. However, Footnote \ref{Footnote: mult group of int mod p is cyclic} asserts that the multiplicative group $(\mathbb{Z}/q\mathbb{Z})^{\times}$ is cyclic and has order $q-1$. But, $3\nmid q-1$, so naturally the relation $k^{3}=1$ is possible if and only if $k=1$, as it maps to the identity. Thus $\text{Ker}(\phi)$ is trivial, in that it only contains one element, so that 
    \begin{equation*}
        |\text{Im}(\phi)|=|(\mathbb{Z}/q\mathbb{Z})^{\times}/\text{Ker}(\phi)|=|(\mathbb{Z}/q\mathbb{Z})^{\times}|/1 = |(\mathbb{Z}/q\mathbb{Z})^{\times}|.
    \end{equation*}
    This satisfies the condition for $\phi$ to be surjective, so due to the mapping $\phi$ defined earlier, every element of $(\mathbb{Z}/q\mathbb{Z})^{\times}$ is a perfect cube, i.e., every integer is a cubic residue modulo $q$.
\end{proof}

The implications of this result are beautifully simple. We look at an example. 
\begin{ex}
    Suppose that the modulus is 11, because $11\equiv 2\pmod{3}$. Proposition \ref{Proposition about how every int is a cub res if q=2 mod 3} tells us that every integer must be a cubic residue modulo $11$. Since we are dealing modulo $11$, every least residue $0,1,\ldots,10$ must be a cubic residue. We have

\begin{center}
\begin{tabular}{c c c c}
 $ 0^3 \equiv 0 \pmod{11}$ &
$1^3 \equiv 1 \pmod{11}$ &
$2^3 \equiv 8 \pmod{11}$ &
$3^3 \equiv 5 \pmod{11}$ 
\\
$4^3 \equiv 9 \pmod{11}$ &
$5^3 \equiv 4 \pmod{11}$ &
$6^3 \equiv 7 \pmod{11}$ &
$7^3 \equiv 2 \pmod{11}$ 
\\
$8^3 \equiv 6 \pmod{11}$ &
$9^3 \equiv 3 \pmod{11}$ &
$10^3 \equiv 10 \pmod{11}$. 
\end{tabular}
\end{center}

\noindent Notice that each integer in the finite field $\ZZ/11\ZZ$ is represented, so every element is a cubic residue modulo $11$, and we get that $(2/q)_{3}=1$ for free whenever $q\equiv 2\pmod{3}$. 
\end{ex}

\subsection*{The Complex Cases} 
The remaining cases are more difficult. Since Proposition \ref{Proposition about how every int is a cub res if q=2 mod 3} shows us that rational primes are always cubic residues of 2 modulo some rational prime $q\equiv 2\pmod{3}$, we consider the case where the modulus is a complex prime of the form $a+b\omega$. 

\indent The first result we will prove eliminates consideration of most complex elements in the final result. The first important step is noticing that the congruence $x^{3}\equiv 2\pmod{\pi}$, where $\pi\in\ZZ[\omega]$ is prime, is solvable if and only if every congruence $x^{3}\equiv 2\pmod{\pi^{\prime}}$ is also solvable, where $\pi^{\prime}$ denotes an associate\footnote{Recall that in a commutative ring $R$, two elements $r,s\in R$ are \textit{associate} if there is some unitary element $u\in R$ such that $us=r$. In this case, we are asserting that there exists some $u\in \ZZ[\omega]$ such that $\pi = u\pi^{\prime}$.\label{Footnote: associate congruences}} of $\pi$. To differentiate between $\pi$ and its associates, we can assume that $\pi$ is primary. 
\begin{lemma} \label{Lemma: pi=1 mod 2 for x^3=2 mod pi to be solvable}
    The cubic congruence $x^{3}\equiv 2\pmod{\pi}$, where $\pi=a+b\omega\in \ZZ[\omega]$ is prime for $a,b\in\ZZ$, is solvable if and only if $\pi\equiv 1\pmod{2}$, i.e., if and only if $a\equiv 1\pmod{2}$ and $b\equiv 0\pmod{2}$. 
\end{lemma}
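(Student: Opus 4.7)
The plan is to reduce the question to evaluating $(\pi/2)_{3}$ via the law of cubic reciprocity (Theorem \ref{The Law of Cubic Reciprocity}) and then classify the possibilities directly. Since the solvability of $x^{3}\equiv 2\pmod{\pi}$ depends only on the associate class of $\pi$ (see Footnote \ref{Footnote: associate congruences}), we may assume that $\pi$ is primary; the case $\pi = 2$ is trivial and can be excluded from what follows.

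First I would verify that $2$ is itself a primary prime in $\ZZ[\omega]$. The norm form $a^{2}-ab+b^{2}$ never equals $2$ for integers $a,b$ (a quick check modulo $4$ suffices), so any factorization $2=\alpha\beta$ in $\ZZ[\omega]$ with $N\alpha,N\beta > 1$ would require $N\alpha = N\beta = 2$, which is impossible. Hence $2$ is prime, and since $2 \equiv 2 \pmod{3}$, it is also primary. Because $\pi$ and $2$ are both primary primes with distinct norms neither equal to $3$ (indeed $N(2) = 4$ and $N\pi = p$ with $p$ a rational prime different from $2$), the law of cubic reciprocity yields
\begin{equation*}
    \left(\frac{2}{\pi}\right)_{3} = \left(\frac{\pi}{2}\right)_{3}.
\end{equation*}

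The second step is to compute the right-hand side. By the definition of the cubic character,
\begin{equation*}
    \left(\frac{\pi}{2}\right)_{3} \equiv \pi^{(N(2)-1)/3} = \pi \pmod{2},
\end{equation*}
and the character takes values in $\{1,\omega,\omega^{2}\}$. The three nonzero residue classes of $\ZZ[\omega]/2\ZZ[\omega]$ are exactly $1$, $\omega$, and $1+\omega \equiv \omega^{2}\pmod{2}$ (using $\omega^{2} = -1-\omega$ together with $-1 \equiv 1 \pmod{2}$). The case $a \equiv b \equiv 0 \pmod{2}$ is ruled out, since it would force $2 \mid \pi$, contradicting that $\pi$ is a primary prime other than $2$. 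One then reads off the three remaining parity classes: if $a$ is odd and $b$ is even, then $\pi \equiv 1 \pmod 2$ and $(\pi/2)_{3} = 1$; if $a$ is even and $b$ is odd, then $\pi \equiv \omega \pmod 2$ and $(\pi/2)_{3} = \omega$; if $a,b$ are both odd, then $\pi \equiv \omega^{2} \pmod 2$ and $(\pi/2)_{3} = \omega^{2}$. Only the first case produces the value $1$, which is precisely when the congruence is solvable, and the lemma follows.

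The main obstacle here is not conceptual but administrative: confirming that the hypotheses of cubic reciprocity are satisfied for the pair $(\pi,2)$ (primarity of both, distinct norms, neither equal to $3$), and observing that the cubic residue symbol modulo $2$ reduces to a coset-representative computation in the four-element field $\ZZ[\omega]/2\ZZ[\omega]$. Once these pieces are in place, the statement drops out of the enumeration of parity classes of $(a,b)$ with essentially no further work.
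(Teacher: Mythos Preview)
Your proposal is correct and follows essentially the same route as the paper: apply cubic reciprocity to flip $(2/\pi)_{3}$ to $(\pi/2)_{3}$, then use the definition of the cubic character together with $N(2)=4$ to reduce the latter to $\pi\pmod{2}$. Your version is somewhat more careful than the paper's (you explicitly verify that $2$ is a primary prime and you enumerate all three nonzero parity classes of $(a,b)$ rather than treating only the $\pi\equiv 1$ case), but the underlying argument is identical.
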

\begin{proof}
    \indent Let $\pi=a+b\omega$ be a primary prime. By Theorem \ref{The Law of Cubic Reciprocity}, we have that $(2/\pi)_{3} = (\pi/2)_{3}$. We evaluate the right-hand-side of this equality. By the definition of the cubic character, 
    \begin{equation*}
        \pi^{\frac{N(2)-1}{3}} = \pi^{(4-1)/3} = \pi \equiv (\pi/2)_{3}\pmod{2}. 
    \end{equation*}
    Notice that $(\pi/2)_{3}=1$, i.e., $\pi$ is a cubic residue modulo $2$, if and only if the congruence $x^{3}\equiv \pi\pmod{2}$ is solvable. This congruence is solvable, however, if and only if $\pi\equiv 1\pmod{2}$, because $\pi$ would no longer be prime if $\pi\equiv 0\pmod{2}$. Therefore $(\pi/2)_{3}=1$ if and only if $\pi\equiv 1\pmod{2}$. Similarly, we have that $(2/\pi)_{3}=1$ if and only if $\pi\equiv 1\pmod{2}$. In either case, we thus have that the congruence $x^{3}\equiv 2\pmod{\pi}$ is solvable if and only if $\pi\equiv 1\pmod{2}$. 
\end{proof}

The importance of Lemma \ref{Lemma: pi=1 mod 2 for x^3=2 mod pi to be solvable} is that it reduces the problem to only considering some rational cases, so it is fundamental to proving Theorem \ref{Theorem: MAIN THEOREM}. We are now in the final stretch. Euler's conjectures regarding the cubic residuacity of special integers only appeared posthumously in his \textit{Tractatus} \cite{euler1849tractatus}, which was published in 1849 despite the fact that the contents were originally written between 1748 and 1750. The history of who proved the rule first is complex as Gauss produced many relevant informal notes that were only published posthumously, but other mathematicians also published proofs throughout Gauss' lifetime. However, Gauss did manage to evaluate the cubic and biquadratic characters of 2 in his sketches \cite{gauss1900werke8article} using the incredibly important ideas from Theorem \ref{Theorem: exist unique integers st A^2+27B^2=4p}, just as we do now.

\begin{proof}
[Proof of Theorem \ref{Theorem: MAIN THEOREM}]
    Let $\pi=a+b\omega$ be prime. By the definition of the norm, $N\pi=p=a^{2}-ab+b^{2}$, where $a,b\in\ZZ$. We start by proving the forward direction by supposing that the cubic congruence $x^{3}\equiv 2\pmod{p}$ is solvable. Since $\pi$ is associate to $p$, we know that $x^{3}\equiv 2\pmod{\pi}$ must also be solvable. So, by Lemma \ref{Lemma: pi=1 mod 2 for x^3=2 mod pi to be solvable}, $\pi\equiv 1\pmod{2}$. Notice that this implies that in the expression $a+b\omega$, $a$ must be odd and $b$ must be even.

    \indent Let us now examine $p=a^{2}-ab+b^{2}$. By Remark \ref{Remark: Implication of J(x,x)=a+bw} and Proposition \ref{Proposition: exist integers a and b st a^2-ab+b^2}, we have $b\equiv 0\pmod{3}$, or $3|b$, for the representation $N(a+b\omega) = a^{2}-ab+b^{2} = p$. Multiplying both sides by 4, we obtain $4p=4a^{2}-4ab+4b^{2}=4a^{2}-4ab+b^{2}+3b^{2}$. Notice that we can factor the right-hand-side, yielding $4p=(2a-b)^{2}+3b^{2}$. Now let $A=2a-b$ and $B=b/3$. Plugging these in, we obtain
    \begin{equation*}
        4p = A^{2}+3(3B)^{2} = A^{2}+27B^{2}.
    \end{equation*}
    By Theorem \ref{Theorem: exist unique integers st A^2+27B^2=4p}, this representation is unique, in that both $A$ and $B$ are unique integers up to sign. Notice that since $b$ is even and $3|b$, for $m,n\in\ZZ$, we write $b=2m\cdot 3n$. Therefore $B=6mn/3=2mn$, which is even. Since $4p = A^{2}+27B^{2}$, we also know that $A^{2}+27B^{2}$ must be even, which further requires $A$ to be even. Substitute $C=A/2$ and $D=B/2$, which are both necessarily integers. Therefore $p=C^{2}+27D^{2}$, proving the forward direction.

    \indent We now prove the backward direction, undoing what we did in the forward direction. Suppose that there exist $C,D\in\ZZ$ such that $p=C^{2}+27D^{2}$. Multiplying both sides by $4$, we have
    \begin{equation*}
        4p = 4C^{2}+4\cdot 27D^{2} = (2C)^{2}+27(2D)^{2}.
    \end{equation*}
    Again, by Theorem \ref{Theorem: exist unique integers st A^2+27B^2=4p}, there must exist some unique integer $B=\pm 2D$, which implies that $B$ is even. Consequently $b$ must also be even, implying that $b\equiv 0\pmod{2}$; necessarily, $\pi\equiv 1\pmod{2}$ since $\pi$ is prime. Therefore by the backward direction of Lemma \ref{Lemma: pi=1 mod 2 for x^3=2 mod pi to be solvable}, the cubic congruence $x^{3}\equiv 2\pmod{\pi}$ is solvable. We now need to show that this extends further to modulus a rational prime $p$. 

    \indent Theorem \ref{Theorem: residue class ring contains Npi elements} shows that the residue class ring $\ZZ[\omega]/\pi\ZZ[\omega]$ contains exactly $N\pi=p$ elements. Therefore, among all elements of $\ZZ[\omega]/\pi\ZZ[\omega]$, there must exist some $k\in\ZZ$ such that $k^{3}\equiv 2\pmod{\pi}$. By definition of congruence, this means $\pi|(k^{3}-2)$. By our discussion preceding Footnote \ref{Footnote: associate congruences}, it must also be true that $\pi^{\prime}|(k^{3}-2)$, where $\pi^{\prime}$ is some associate of $\pi$. Since $\pi\pi^{\prime}=p$, we have $p|(k^{3}-2)^{2}$, so $p|(k^{3}-2)$. Rewriting, this implies that $k^{3}\equiv 2\pmod{p}$, or $k$ is a solution to the cubic congruence $x^{3}\equiv 2\pmod{p}$. 
\end{proof}

Notice that this result is only applicable when considering moduli greater than or equal to 28, since $C^{2}+27D^{2}\geq 28$ when $|C|,|D|\geq 1$.  
\begin{ex}
    In this first example we examine the case where the modulus is 29. Notice that there is no ordered pair $(A,B)$ such that $29=A^{2}+27B^{2}$. Therefore there is no solution to the congruence $x^{3}\equiv 2\pmod{29}$. 
\end{ex}
\begin{ex}
    In this second example we let the modulus be 259. Since $259=4^{2}+27(3^{2})$, there must exist a solution to the cubic congruence $x^{3}\equiv 2\pmod{259}$, so $(2/259)_{3}=1$. 
\end{ex}

\subsection*{Acknowledgements} The author wishes to acknowledge the late Dr. Tamar Avineri who taught at the North Carolina School of Science and Mathematics for over $20$ years. Dr. Avineri was always endlessly supportive, inspiring, and so full of life, insight, and virtue: she could not have been a better advisor, always ensuring that every proof was sufficiently parsimonious. The author also wishes to thank the anonymous referees for their comments and suggestions.
\bibliographystyle{unsrt}
\bibliography{biblio.bib}

\

\noindent \textbf{Matias C. Relyea} recently graduated from the North Carolina School of Science and Mathematics. Currently he is most interested in number theory, but he hopes to widen his knowledge of other areas of mathematics. In his free time, Matias enjoys playing the violin, badminton, and competing casually at TeXnique.
\end{document}